\newtheorem{theorem}{Theorem}[section]
\newtheorem{lemma}[theorem]{Lemma}
\newtheorem{prop}[theorem]{Proposition}
\newtheorem{cor}[theorem]{Corollary}
\theoremstyle{definition}
\newtheorem{definition}[theorem]{Definition}
\newtheorem{obs}[theorem]{Observation}
\theoremstyle{remark}
\newtheorem{remark}[theorem]{Remark}
\numberwithin{equation}{section}
\def\cB{\mathcal{B}}
\def\cN{\mathcal{N}}
\def\cP{\mathcal{P}}
\def\cA{\mathcal{A}}
\def\eps{\varepsilon}
\def\N{\mathbb{N}}
\def\F{\mathbb{F}}
\def\b1{\mathds{1}}
\def\Z{\mathbb{Z}}
\def\Q{\mathbb{Q}}
\DeclareMathOperator\rank{rank}
\DeclareMathOperator\ord{ord}
\newcommand{\BB}{\mathcal{B}}
\newcommand{\CC}{\mathcal{C}}
\newcommand{\OO}{\mathcal{O}}
\newcommand{\pp}{\mathfrak{p}}
\newcommand{\res}{\operatorname{res}}
\newcommand{\Gal}{\operatorname{Gal}}
\newcommand{\id}{\operatorname{id}}
\renewcommand{\leq}{\leqslant}
\renewcommand{\geq}{\geqslant}
\newcommand{\gen}[1]{\langle #1 \rangle}
\begin{document}

\title{Deterministic polynomial factorisation modulo many primes}

\author{Daniel Altman}
\address{Department of Mathematics, Stanford University, CA 94305, USA}
\curraddr{}
\email{daniel.h.altman@gmail.com}
\thanks{}

\subjclass[2020]{Primary 11Y16; Secondary 11T06, 11Y05, 68W30}

\keywords{}

\date{}

\dedicatory{}

\begin{abstract}
Designing a deterministic polynomial time algorithm
for factoring univariate polynomials over finite fields
remains a notorious open problem.
In this paper, we present an unconditional deterministic algorithm
that takes as input an irreducible polynomial $f \in \Z[x]$,
and computes the factorisation of its reductions modulo~$p$
for all primes $p$ up to a prescribed bound $N$.
The \emph{average running time per prime} is polynomial in the size
of the input and the degree of the splitting field of $f$ over $\Q$.
In particular, if $f$ is Galois,
we succeed in factoring in (amortised) deterministic polynomial time.
\end{abstract}

\maketitle

\section{Introduction}\label{s:intro}

A central problem in computational number theory
is to design efficient algorithms for factoring
univariate polynomials over finite fields.
If randomisation is permitted,
then factorisation algorithms running in polynomial time
go back to at least \cite{Ber70},
and many such algorithms are widely used in practice;
for a survey, see \cite[Ch.\,14]{vzGG13}, \cite{vzGP01} or \cite{Shp99}.
The current asymptotic complexity record is held by \cite{KU11}. A reader interested in randomised algorithms may also wish to consult \cite{CZ81}, \cite{KS98}, \cite{GNU16}.

For the rest of the paper we restrict our attention to
\emph{deterministic} algorithms.
To simplify the discussion,
we focus on the case of a polynomial over a prime field,
say $f \in \F_p[x]$ of degree $d \geq 2$, where $p$ is a prime number.
Here and throughout the paper, to factorise (or factor) $f$ means to find a complete factorisation of $f$
into irreducibles in $\F_p[x]$.
Unfortunately, no deterministic polynomial time algorithm
is known for this task.
The size of the input is $O(d \log p)$ bits,
so ``polynomial time'' means polynomial in $d$ and $\log p$.
Even the $d = 2$ case,
i.e., computing square roots modulo $p$ in time $(\log p)^{O(1)}$,
remains open.

In this paper, we address an averaged (over $p$) variant of this factorisation problem.
Let $f \in \Z[x]$ be a polynomial with \emph{integer} coefficients,
and let $N$ be a large integer.
For any prime $p$, let $\bar f \in \F_p[x]$ denote the reduction
of $f$ modulo $p$.
(We will systematically use this overline notation to indicate reduction
of various objects modulo~$p$,
the choice of $p$ always being clear from context.)
Consider the problem of finding the factorisation of $\bar f$
\emph{for all primes $p < N$}.
The question is whether this can be done more efficiently than
by simply applying an existing deterministic factorisation algorithm
to $\bar f$ for each prime separately.
The following theorem, which is the main result of the paper,
gives a partially affirmative answer. Here and throughout, any constants implicit in big-$O$ notation are absolute.
\begin{theorem}\label{t:main}
   Let $f \in \Z[x]$ be a monic irreducible polynomial of degree $d \geq 2$
   and with coefficients having absolute value at most $H\geq 2$.
   Let $L$ be the splitting field of $f$ over~$\Q$
   and let $m \coloneqq [L:\Q]$.
   Then we may deterministically compute the factorisation into irreducibles
   of $\bar f \in \F_p[x]$ for all primes $p < N$ in time
   \[ \pi(N) \cdot (m\log H)^{O(1)} \log^5 N, \]
   where $\pi(N)$ denotes the number of primes $p<N$.
\end{theorem}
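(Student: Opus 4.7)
The plan is to precompute the splitting field $L$ once, and then use the resulting Galois data to factorise each $\bar f\in\F_p[x]$ in $(m\log H)^{O(1)}\log^{O(1)} N$ time per prime. The preprocessing builds $L=\Q(\theta)$ as $\Q[y]/(g)$ for a monic $g\in\Z[y]$ of degree $m$, together with polynomials $q_\sigma\in\Z[y]$ of degree $<m$ realising each $\sigma\in G\coloneqq\Gal(L/\Q)$ via $\sigma(\theta)=q_\sigma(\theta)$, and polynomials $\alpha_1,\ldots,\alpha_d\in\Z[y]$ of degree $<m$ expressing the roots of $f$ in $L$. Standard Galois-closure and primitive-element techniques yield time and coefficient-size bounds $(m\log H)^{O(1)}$. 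The $(m\log H)^{O(1)}$ primes dividing $\mathrm{disc}(g)$ are flagged and handled separately by any naive deterministic factoriser (e.g.\ Berlekamp in $\mathrm{poly}(d,p)$ time per such prime), which is absorbable into the stated budget.

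For a good prime $p$ (meaning $p\nmid\mathrm{disc}(g)$), $\bar g$ is squarefree and $R_p\coloneqq\F_p[y]/(\bar g)\cong\prod_{\pp\mid p}k(\pp)$ with every residue field isomorphic to a common $\F_{p^{f_p}}$; the group $G$ acts on $R_p$ by $y\mapsto\bar q_\sigma(y)$. For each $i\le d$ I compute $\bar\alpha_i\in R_p$ and all iterates $\bar\alpha_i^{p^k}$ for $k\le m$ by fast exponentiation. Since $L/\Q$ is Galois, the partition of $\{1,\ldots,d\}$ into orbits under $\F_p$-conjugacy---equivalently, into irreducible factors of $\bar f$---is the same (as an unordered partition) in every coordinate of $R_p$. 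It can therefore be detected inside $R_p$: for each pair $(i,j)$ and each $k\le m$, compute $\gcd(\bar g,\bar\alpha_i^{p^k}-\bar\alpha_j)$. This gcd is nontrivial iff the identity $\bar\alpha_i^{p^k}=\bar\alpha_j$ holds in at least one $k(\pp)$, and by partition-invariance this is equivalent to $\alpha_i,\alpha_j$ belonging to the same irreducible factor of $\bar f$. This recovers the partition, and hence the multiset of factor degrees.

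To output the explicit factor polynomials, however, one must commit to a single residue field---equivalently, extract an irreducible factor of $\bar g$---and this is the main obstacle. The natural analogue of the above gcd trick, $\gcd(\bar g,\bar\theta^p-\bar q_\sigma)$, separates irreducible factors of $\bar g$ whose Frobenii lie in distinct $G$-conjugacy classes, but it collapses to $\bar g$ itself precisely when all Frobenii at primes above $p$ coincide---the typical situation when $G$ has a large abelian quotient, for example in cyclotomic-like subextensions. My plan for resolving this is to invoke a Lenstra-type deterministic construction of $\F_{p^{f_p}}$, whose running time is polynomial in $f_p$, $\log p$, and the largest prime divisor of $f_p$, all bounded by $m$; once an abstract model of the residue field is at hand, one embeds $\bar g$ into it to locate a specific irreducible factor $h$, reduces the $\bar\alpha_i$ modulo $h$, and reads off the factorisation of $\bar f$ by grouping the roots under $\beta\mapsto\beta^p$ and multiplying $(x-\beta)$ over each orbit. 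Verifying that the total cost fits inside $(m\log H)^{O(1)}\log^5 N$ per prime is then routine bookkeeping; the deterministic construction of $\F_{p^{f_p}}$ in the abelian regime is where I expect the bulk of the technical content to lie.
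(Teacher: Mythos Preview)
Your proposal has a fundamental gap at precisely the point you identify as ``the main obstacle'': extracting an irreducible factor of $\bar g$ in time polynomial in $m$ and $\log p$. No unconditional ``Lenstra-type deterministic construction of $\F_{p^{f_p}}$'' with the running time you claim is known; existing constructions either assume GRH or cost $p^{1/2+o(1)}$. Even granting an abstract model of $\F_{p^{f_p}}$, your next step---``embed $\bar g$ into it to locate a specific irreducible factor''---is root-finding for $\bar g$ over that field, which Lenstra's isomorphism theorem does not provide (it requires both irreducibles as input). The simplest case already fails: for $f(x)=x^2-a$ with $a$ a square mod $p$ one has $g=f$, $m=2$, $f_p=1$, so building $\F_p$ is free, but ``embedding $\bar g$'' is exactly computing $\sqrt a\bmod p$. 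More tellingly, your scheme is entirely local in $p$; if it worked it would give an unconditional per-prime algorithm in time $(m\log H\log p)^{O(1)}$, i.e.\ R\'onyai's theorem without GRH, which is open.

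What the paper actually does is supply the missing irreducible factor of $\bar g$ by \emph{amortising over primes}. It factors the Galois polynomial $g$ modulo all $p<N$ simultaneously (Theorem~\ref{t:galois-fact}): for each $\sigma\in\Gal(L/\Q)$ and each element $u$ of a $\Z$-basis for $L^\sigma\cap\Z[\theta]$ it forms the integer polynomial $h_{\sigma,u}(y)=\res_x(g(x),\tilde u(x)-y)$, and then finds the roots of these finitely many fixed polynomials modulo all $p<N$ at once via Bernstein's divisor-finding algorithm (Section~\ref{s:rootfinding}). These roots yield separating sets for $\bar g$ at every good prime, hence an explicit irreducible factor $\bar g_0$, after which your orbit computation is essentially what Section~\ref{s:general} carries out. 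A secondary error: your ``partition-invariance'' claim is false. The Frobenius elements at distinct $\pp\mid p$ are conjugate in $G$ but need not induce the same partition of the root indices $\{1,\dots,d\}$ (for $G\cong S_3$ with Frobenius class a transposition, each of the three $\pp$ fixes a different $\alpha_i$), so the relation ``$\gcd(\bar g,\bar\alpha_i^{p^k}-\bar\alpha_j)\ne 1$ for some $k$'' can merge genuinely distinct irreducible factors of~$\bar f$.
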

Since deterministic polynomial-time factorisation over $\Z$ is known (see Appendix~\ref{s:fact-in-z}), one may remove the adjective ``irreducible''  from the first line of the above. Furthermore, if $f$ has leading coefficient $a_d \ne 1$, then replacing $f(x)$ with $a_d^{d-1} f(x/a_d)$ and dealing with those $p$ dividing $a_d$ separately, one could also remove the adjective ``monic''.

Note that the size of the input is $O(d \log H + \log N)$ bits.
If $f$ is \textit{Galois} over $\Q$ in that $L$ is generated over $\Q$ by a single root of $f$ (so $m = d$),
or more generally, if the splitting field has degree $m = d^{O(1)}$,
then the running time per prime is $(d \log H)^{O(1)} \log^5 N$,
which is fully polynomial in the input size.
We also remark that the exponents of $m$ and $\log H$ in Theorem \ref{t:main}
could be worked out more explicitly if desired.

Unfortunately, for general $f$,
the degree $m = [L:\Q]$ could be as large as $d!$, slightly worse than exponential in $d$.
Improving Theorem \ref{t:main}
to achieve a complexity bound depending polynomially on $d$ remains open.

As far as we are aware,
no-one has previously considered the deterministic complexity of
polynomial factorisation when amortised over primes in this way. Of course, for practical computations we always have recourse to the much faster randomised factorisation algorithms,
so the results of the present paper should be viewed as purely theoretical.

\subsection*{Comparison to other factoring algorithms}

The literature on deterministic factorisation is vast;
for a detailed bibliography we refer the reader
to the surveys mentioned earlier. Progress on this problem has been relatively sparse over the past couple of decades (with a couple of exceptions, some noted below), and the aforementioned surveys remain pertinent.
In this section we compare Theorem~\ref{t:main} to a few key results.

For factoring an arbitrary degree $d$ polynomial in $\F_p[x]$,
the best unconditional algorithms known have complexity $(dp)^{O(1)}$,
i.e., polynomial in $d$ but fully exponential in $\log p$.
The dependence on $p$ was initially $p^{1+o(1)}$ \cite{Ber67},
and this was reduced to $p^{1/2+o(1)}$ by Shoup \cite{Sho90}.
For subsequent improvements see \cite[\S9]{vzGSho92}
and \cite[Thm.\,1.1]{Shp99}.
It is not known how to replace the $p^{1/2+o(1)}$ term
by $p^{1/2-\eps}$ for any $\eps > 0$.

Shoup also proved that his algorithm runs in polynomial time ``on average'',
in the sense that if a polynomial is selected uniformly at random from all
polynomials of degree $d$ in $\F_p[x]$,
then the expected running time is polynomial in $d$ and $\log p$
\cite[\S4]{Sho90}.
This averaging is, of course, in a different sense from Theorem~\ref{t:main}.

Schoof showed that for $a \in \Z$,
one can find the square root of $a$ modulo~$p$,
i.e., factor $x^2 - a$ in $\F_p[x]$,
in time $((|a|+1) \log p)^{O(1)}$ \cite{Sch85}.
This is polynomial in $\log p$ but exponential in $\log (|a|+3)$.
Pila \cite{Pil90} extended Schoof's methods to prove that
if $\ell$ is an odd prime and $p \equiv 1 \pmod \ell$,
then one can find the $\ell$-th roots of unity in $\F_p$ (i.e., factor the $\ell$-th cyclotomic polynomial $\Phi_\ell(x) \in \F_p[x]$)
in time $(\log p)^{C_\ell}$ for some $C_\ell > 0$.
Theorem \ref{t:main} solves both problems for all $p < N$
in polynomial time on average:
 $(\log |a| \log N)^{O(1)}$ and $(\ell \log N)^{O(1)}$
per prime respectively.

A number of results on deterministic factoring
have been proved assuming the GRH (Generalised Riemann Hypothesis).
For example:
\begin{itemize}
\item 
R\'onyai \cite{Ron92} showed that for $f \in \Z[x]$,
one can factor $\bar f \in \F_p[x]$ (for a single prime $p$ not dividing the discriminant of $f$)
in time $(m \log H \log p)^{O(1)}$,
where the parameters $m$ and $H$ have the same meaning
as in Theorem~\ref{t:main}.
He thus obtains a similar running time to Theorem~\ref{t:main},
including the polynomial dependence on the degree of the splitting field,
but for a single prime rather than on average over primes.

\item
Evdokimov \cite{Evd94} showed that one can factor $f \in \F_p[x]$ of degree $d$ in time
$(d^{\log d} \log p)^{O(1)}$.
Here the dependence on $\log p$ is polynomial,
and while the dependence on $d$ is not quite polynomial,
it is much closer to polynomial than exponential.
In particular, when the splitting field is large (e.g.~$m = d!$),
this algorithm performs much better with respect to $d$
than R\'onyai's algorithm or our Theorem \ref{t:main}. Recent developments in this line of work may be found in \cite{guo1}, \cite{guo2}. 

\item
There are deterministic algorithms
for factoring $f \in \F_p[x]$ in polynomial time
for ``special'' values of $p$. For example, the case that $p-1$ is sufficiently smooth, i.e., has only small prime divisors,  is addressed by
\cite{MS88} (though it remains unknown whether there are infinitely many such primes). More generally the case when $\Phi_k(p)$ is smooth for some $k$ is addressed in \cite{BvzGL01}.
\end{itemize}
Despite these results, even under GRH there is still no method known for
factoring an arbitrary $f \in \F_p[x]$ in deterministic polynomial time.

\subsection*{Notation, terminology, conventions.}

Throughout the paper we observe the following conventions.
The $\log(\cdot)$ function will always mean
the logarithm to base 2.
Expressions such as $\log \log N$ appearing in complexity bounds
are always tacitly adjusted to take positive values for all arguments
in the domain. Furthermore, when we give upper bounds of the form $x^{O(1)}$ for some positive $x$, this should be understood to mean $(\max(2,x))^{O(1)}$. By the \emph{size} of an integer $n$ we mean~$|n|$,
whereas the \emph{bit size} means the number of bits
in its binary representation.
Given a rational number $c = a/b$ written in reduced form,
we will say that its \textit{height} is the value $\max(|a|,|b|)$.
Finally, we will use Vinogradov notation in addition to big-$O$ notation:
$f\ll g$ is equivalent to $f = O(g)$.

Statements in this paper about algorithm runtimes may be understood to occur in the RAM model. Although it is likely that the runtime of algorithms throughout this paper are dominated by the cost of arithmetic and could be stated in the multitape Turing model \cite[Ch.\,2]{Pap94}, we do not conduct this analysis throughout for the sake of simplicity. An exception is Section~\ref{s:rootfinding}, where we operate in the multitape Turing model and track the cost of data access (which turns out to be negligible), since the algorithms there are slightly ``lower level'' and may be of independent interest. We refer the reader to \cite{CR73} for the relationship between complexities in the respective models.

\subsection*{Acknowledgments}
The author is particularly grateful to David Harvey for many helpful conversations over the years, and without whom this document would not exist. The author is also very grateful to Igor Shparlinski for helpful conversations and generous feedback on earlier versions of this document. Work on this project was supported by an AMS-Simons travel grant.

\section{High level sketch and outline}\label{s:outline}

In this section we describe the outline of the paper
and the ideas behind the main algorithms.

The proof of Theorem \ref{t:main} proceeds in three main steps.
First, in Section \ref{s:rootfinding},
we present an algorithm that finds all roots of an integer polynomial
modulo primes in amortised polynomial time (Theorem \ref{t:rootfinding}).
Next, in Section~\ref{s:galois}, we show how the problem of factorising \textit{Galois} integer polynomials
modulo primes may be reduced to the problem of root-finding modulo primes, allowing us to invoke the results of Section \ref{s:rootfinding}. Thus we obtain an algorithm for factorising Galois integer polynomials: Theorem \ref{t:galois-fact}.
Finally, in Section \ref{s:general},
we show how the factorisation of general irreducible integer polynomials may be reduced to that of the Galois case; this culminates in the proof of Theorem \ref{t:main}.

\subsection*{Root finding}

The root finding step relies on an algorithm of Bernstein \cite{Ber00} which,
given a list $\cN$ of positive integers
and a list $\cP$ of primes,
efficiently finds all pairs $(p, n) \in \cP \times \cN$ such that $p \mid n$.
The naive algorithm (testing every pair separately)
has complexity at least $|\cP| \cdot |\cN|$.
By employing fast integer arithmetic and related techniques,
Bernstein instead achieves a running time that is
quasi-linear in the total bit size of $\cN$ and $\cP$.
(The paper \cite{Ber00} does not appear to have been published;
we give a self-contained presentation of the algorithm
in Section~\ref{s:bernstein}.
A more general version, which also handles multiplicities,
appears as \cite[Alg.\,21.2]{Ber05}.) 

Now, given $N \geq 1$ and a polynomial $f \in \Z[x]$,
to find the roots of $f$ modulo all $p < N$,
we proceed by applying Bernstein's algorithm with
$\cN \coloneqq [f(0), \ldots, f(N-1)]$
and taking $\cP$ to be the set of primes $p < N$.
The details are given in Section \ref{s:rootfinding-for-real}.

\subsection*{Factoring polynomials --- the Galois case}

To describe and motivate the second part of the algorithm,
we first describe Berlekamp's 1967 deterministic algorithm \cite{Ber67}.
Indeed, Section \ref{s:galois} may be loosely described as
a globalisation of Berlekamp's algorithm. 

After reducing to the case of a squarefree polynomial
(this is not difficult; see \cite[\S14.6]{vzGG13}),
Berlekamp's algorithm runs as follows.
Suppose that $f = f_1\cdots f_r$ is the factorisation of $f \in \F_p[x]$
into distinct (unknown) irreducibles,
and consider the $\F_p$-algebra
\begin{equation}
\label{eq:berlekamp-iso}
   R \coloneqq \frac{\F_p[x]}{\gen{f}}
   \cong \frac{\F_p[x]}{\gen{f_1}} \oplus \cdots
         \oplus \frac{\F_p[x]}{\gen{f_r}}.
\end{equation}
The aim is to find an element of $R$ that is zero in
at least one component of the right hand side,
and nonzero in some other component.
This element (after lifting to $\F_p[x]$) has nontrivial gcd with $f$,
yielding a nontrivial factorisation of~$f$.
One then recurses on these factors,
until the complete factorisation is found.

To find such an element of $R$,
Berlekamp uses properties of the $p$th power Frobenius map
$\phi \colon R \to R$ given by $\phi(u) \coloneqq u^p$.
Since $\phi$ is a linear map one may use linear algebra to compute a basis $\BB$ for
$B_\phi \coloneqq \ker(\phi - \id)$, the subspace fixed by~$\phi$,
often called the \emph{Berlekamp subalgebra}.
Via the isomorphism \eqref{eq:berlekamp-iso},
this subspace is identified with $\F_p \oplus \cdots \oplus \F_p$,
where each copy of $\F_p$ is the prime subfield of the corresponding
component $\F_p[x]/\gen{f_i}$.
If $f$ is reducible (i.e., $r > 1$),
then $\BB$ must include some $u$
whose image in $\F_p \oplus \cdots \oplus \F_p$
does not have equal values in all components.
For this $u$, there must therefore exist some $a \in \F_p$ such that
$u - a$ is zero in at least one component and nonzero in another.
The algorithm identifies such $u$ and $a$ by
simply computing $\gcd(f, u - a)$
for all $u \in \BB$ and $a \in \F_p$.

Let us now return to the setting of Theorem \ref{t:main}.
Let $f \in \Z[x]$ be an irreducible polynomial of degree $d \geq 2$,
and consider the number field $K \coloneqq \Q(\theta)$
where $\theta$ is a root of $f$.
We assume further that $f$ is Galois,
meaning that all roots of $f$ lie in $K$. Throughout the rest of this discussion we assume familiarity with some requisite basic algebraic number theory, a brief sketch of which may be found in Appendix~\ref{a:alg-nt}.

Let $\delta_f \in \Z$ be the \textit{discriminant} of $f$, assume that $p\nmid \delta_f d$ (the remaining primes can be handled by other methods), and let $\bar f \in \F_p[x]$ be the reduction of $f$ modulo $p$.
The assumption that $p \nmid \delta_fd$ (which we will now cease to reiterate) implies that
$\bar f$ factors into distinct irreducibles in $\F_p[x]$,
say $\bar f = \bar f_1 \cdots \bar f_r$, so we have 
\[
   R_p \coloneqq \frac{\F_p[x]}{\langle \bar f \rangle}
   \cong \frac{\F_p[x]}{\langle \bar f_1 \rangle} \oplus \cdots \oplus \frac{\F_p[x]}{\langle \bar f_r \rangle}.
\]

To ``lift'' Berlekamp's algorithm to characteristic zero,
we need a global analogue of the Frobenius map $\phi \colon R_p \to R_p$. To this end we note the isomorphisms
\begin{equation}
\label{eq:isomorphisms}
   R_p = \frac{\F_p[x]}{\langle \bar f \rangle}
   \cong \frac{\Z[\theta]}{p\Z[\theta]}
   \cong \frac{\OO_K}{p\OO_K},
\end{equation}
where $\OO_K$ is the ring of integers of $K$. The role of $\phi$ \textit{at a particular irreducible factor $f_i$} is then replicated by  (the reduction mod $p$ of) the \textit{Frobenius element} $\sigma=\sigma_i$ of the Galois group $G \coloneqq \Gal (K/\Q)$ for the corresponding prime ideal $\pp_i$ in $p\OO_K$. This descends to an automorphism $\bar \sigma \colon R_p \to R_p$
that fixes $\langle \bar f_i \rangle$ and acts as the $p$th power map
on the component $\F_p[x]/\langle \bar f_i \rangle$. We note that $\phi$ and $\bar \sigma$
are usually \textit{not} the same map on $R_p$,
although they act in the same way on the $i$-th component.

Having replaced $\phi$ by $\bar \sigma$,
we must also replace the subspace $B = \ker(\phi - \id)$
by $B_{\bar \sigma} \coloneqq \ker(\bar \sigma - \id) \subseteq R_p$.
Note that in general $B_{\bar \sigma} \neq B$;
in other words, $B_{\bar \sigma}$ is usually not equal to
$\F_p \oplus \cdots \oplus \F_p$,
so Berlekamp's separation argument must be modified. To this end, letting $\BB_{\bar \sigma}$ be a basis for $\ker(\bar \sigma - \id)$, we prove that for any pair $f_i, f_j$ of distinct irreducible factors, there exists some $u \in \BB_{\bar \sigma}$ and $a \in \F_p$ such that $u - a$ is zero in the $\F_p[x]/\langle \bar f_i \rangle$ component of $R_p$
and nonzero in the $\F_p[x]/\langle \bar f_j \rangle$ component. This allows us to completely recover the factorisation of $\bar f$ in $\F_p[x]$ from the factors $\gcd(\bar f, u - a)$ (for all $\sigma \in G$, $u \in \BB_{\bar\sigma}$ and $a \in \F_p$).

Of course, this strategy is too slow to implement directly because we would need to consider every $a \in \F_p$ for all $p < N$.
Instead, our plan is to use the fast amortised root-finding algorithm
of Section~\ref{s:rootfinding} to quickly locate the relevant
values of $a$ for all primes simultaneously. Consider for each $\sigma \in G$
the subfield $K^\sigma \coloneqq \ker(\sigma - \id)$ of~$K$,
and define $B_\sigma \coloneqq K^\sigma \cap \Z[\theta]$.
The latter is a $\Z$-submodule of $\Z[\theta]$ of rank $[K^\sigma : \Q]$.
Let $\BB_\sigma$ be a $\Z$-basis for $B_\sigma$.
Using the hypothesis that $p \nmid \delta_f d$,
one can prove (Lemma \ref{l:bases-mod-p}) that
if we reduce the elements of $\BB_\sigma$ modulo $p$,
we get a spanning set for $B_{\bar\sigma}$.
So our goal becomes:
for each $\sigma \in G$, each $u \in \BB_\sigma$, and each prime $p < N$,
find all non-trivial gcds of the form $\gcd(\bar f, \bar u - a)$,
where $\bar u \in \F_p[x]/\langle \bar f \rangle$
indicates the reduction of $u$ modulo $p$.

The final ingredient is the observation that we can
detect these non-trivial gcds for many primes simultaneously. For any $u \in \BB_\sigma$,
let $\tilde u \in \Z[x]$ denote the unique polynomial of degree less than $d$
such that $\tilde u(\theta) = u$,
and consider the resultant
$h_u(a) \coloneqq \res_x(f(x), \tilde u(x) - a) \in \Z[a]$.
This polynomial has the property that its roots modulo $p$
 correspond to those $a \in \F_p$ such that
$\gcd(\bar f, \bar u - a)$ is nontrivial.

We thus arrive at the following algorithm,
whose analysis is summarised in Theorem \ref{t:galois-fact}.
First we perform a series of global computations, independent of $p$:
we compute the Galois group $G = \Gal(K/\Q)$,
a basis $\BB_\sigma$ for $B_\sigma$ for each $\sigma \in G$,
and the resultants $h_{\sigma,u} \in \Z[a]$ for each $u \in \BB_\sigma$.
We then apply Theorem \ref{t:rootfinding} to each $h_{\sigma,u}$ to find its
roots $a \in \F_p$ for all $p < N$.
Finally, working now in $\F_p[x]$ for each $p$,
we compute $\gcd(\bar f, \bar u - a)$
for each $\sigma \in G$, each $u \in \BB_\sigma$,
and each of the roots $a \in \F_p$ found previously. This yields enough information to
recover the factorisations of $\bar f$ for all $p < N$.

\subsection*{Factoring polynomials --- the general case}

Finally, we deduce in Section~\ref{s:general} the factorisation of a general integer polynomial $f$ from the factorisation of a minimal polynomial $g$ for a primitive element of its splitting field (which of course does satisfy that it splits in $\Q[x]/\langle g \rangle$, and so we may use the results from Section \ref{s:galois}). The factorisation of $\bar g$ for all $p<N$ is then use to obtain, for each $p$, a factorisation of $f$ in some explicit finite field of characteristic $p$. From here the factorisation of $\bar f$ (i.e., in $\F_p[x]$) for each $p$ may be deduced by computing the Galois orbits of the factors identified in the previous sentence. 

There are additionally a number of auxiliary statements and algorithms which we will need throughout. These are provided in the various appendices.

\section{Root finding}\label{s:rootfinding}

For a polynomial $h \in \Z[y]$ and a prime $p$, define
\begin{equation} \label{eq:Zp}
Z_p(h) \coloneqq \{0 \leq a < p : h(a) = 0 \bmod p\}.
\end{equation}
In this section we bound the cost of computing $Z_p(h)$ for all primes $p < N$. We note that the record for deterministic root finding at a fixed $p$ is $dp^{1/2+o(1)}$; see \cite{BKS15}. The main theorem of this section is the following. 

\begin{theorem}\label{t:rootfinding}
Let $d, H, N \geq 2$ be integers.
Let $h \in \Z[y]$ be a polynomial of degree~$d$
with coefficients of size at most $H$.
Then we may deterministically compute the sets $Z_p(h)$
for all primes $p < N$ in time
\[ \pi(N) \cdot O\left(B \log(NB) \log^3 N + B \log B \log(dH)\log N \right), \]
where $B \coloneqq \log H + d \log N$.
\end{theorem}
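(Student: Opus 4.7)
The strategy is to reduce root-finding modulo many primes to a divisibility-finding problem, which will be handled by Bernstein's algorithm (presented in Section~\ref{s:bernstein}). Concretely, the plan is to compute the list of values $y_a := h(a)$ for $a = 0, 1, \ldots, N-1$, and then invoke Bernstein on the pair of lists $\mathcal{N} = (y_0, \ldots, y_{N-1})$ and $\mathcal{P} = \{p < N : p \text{ prime}\}$. Since $p < N$, every residue class modulo $p$ is represented in $\{0, \ldots, N-1\}$, so from the output (the set $A_p := \{a : p \mid y_a\}$ for each $p \in \mathcal{P}$) we immediately recover $Z_p(h)$ by reducing each $a \in A_p$ modulo $p$ and deduplicating.

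For the bookkeeping: each $|y_a| \leq (d+1) H N^d$ has bit size $O(B)$, so the total bit size of $\mathcal{N}$ is $O(NB)$, while that of $\mathcal{P}$ is $O(N)$ by the prime number theorem. The quasi-linear complexity of Bernstein's algorithm on inputs of total bit size $O(NB)$ should account for the first summand $\pi(N) \cdot B \log(NB) \log^3 N$ of the stated bound. The final reduction/deduplication step is easily absorbed, since $\sum_p |A_p| = O(NB)$ (each $y_a$ has at most $O(B)$ distinct prime divisors), and per-prime sorting of residues is a lower-order cost.

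The main obstacle is the multipoint evaluation step: $h(0), h(1), \ldots, h(N-1)$ must be computed within a cost matching the second summand $\pi(N) \cdot B \log B \log(dH) \log N$. A naive finite-difference iteration delivers $O(NdB)$ bit operations, which is insufficient when $d$ is large compared to $\log N$. To address this, I would use a fast evaluation routine tailored to consecutive integer points --- for instance, partitioning $[0,N)$ into blocks of length $\min(d, N)$ and performing a Lagrange-type polynomial shift from one block to the next, with all integer multiplications implemented by FFT-based routines whose cost is $O(B \log B \log \log B)$ (the source of the $\log B \log(dH)$ factor). This should yield a total evaluation cost of $\widetilde{O}(NB)$, and after distributing over the $\pi(N)$ primes we recover the second summand. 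The technical work lies in controlling the intermediate integer sizes arising in the shift routine and confirming that the resulting polylog factors collapse to the precise form stated.
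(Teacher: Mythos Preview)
Your high-level plan is exactly the paper's: evaluate $h(0),\ldots,h(N-1)$, feed the nonzero values together with the primes $p<N$ into Bernstein's algorithm (Proposition~\ref{p:parTestDiv}), and transpose the output to obtain the $Z_p(h)$. So far, so good.

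Where you diverge is in the evaluation step, and here you are making your life harder than necessary. You correctly observe that a naive finite-difference or Horner scheme costs $O(NdB)$, which does not meet the bound; but your proposed fix --- block-shifting \`a la Bostan--Gaudry--Schost --- is heavier machinery than the paper uses, and you are left unsure whether the polylog factors come out right. The paper's approach is much simpler: it evaluates $h$ at each point $a$ \emph{individually}, but does so in $O(B\log B\log(dH))$ bit operations per point (Lemma~\ref{l:polyEval}) rather than the naive $O(dB\log B)$. The trick is a straightforward divide-and-conquer on the coefficient sequence: write $h(y)=h^0(y)+y^{n/2}h^1(y)$ and recurse, using precomputed powers $a,a^2,a^4,\ldots$ The $\log(dH)$ factor arises from the tree depth and coefficient sizes, not from any FFT overhead. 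Repeating this $N$ times gives $O(NB\log B\log(dH))$ in total, which is exactly the second summand.

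Two minor points you skated over: Bernstein's algorithm as stated requires nonzero integers, so one must strip out the (at most $d$) values of $a$ with $h(a)=0$ and reinsert the full prime list for those $a$ afterward; and the ``reduce mod $p$ and deduplicate'' step is handled in the paper as a sorting-based transpose (Remark~\ref{r:transpose}), with cost dominated by everything else.
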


The main ingredient in the proof is an algorithm
due to Bernstein \cite{Ber00};
it is presented here as Proposition \ref{p:parTestDiv}.

\begin{remark}
The quantity $B$ arises as a bound for
the bit size of $h(a) \in \Z$ for $a$ in the range $0 \leq a < N$.
\end{remark}

\begin{remark}
If $N$ is sufficiently large compared to $d$ and $H$,
say $N > (dH)^{C_0}$ for fixed $C_0 > 0$,
then the first term in Theorem \ref{t:rootfinding} dominates and the
overall complexity becomes simply $O(d N \log^4 N)$,
i.e., $O(d \log^5 N)$ on average per prime.
(Without this assumption on $N$,
the first term in Theorem \ref{t:rootfinding}
does not provide enough time to solve the problem,
as the algorithm needs time to read the input polynomial
whose bit size is as large as $\Theta(d \log H)$.) We also note explicitly that the second term arises as the cost of computing $h(a)$ for all $a < N$. Depending on the relative sizes of $N, d, H$ (and in particular if $\log H$ is much larger than $d$), this may be done more efficiently by other methods, such as via the Horner scheme and/or observing the linear recurrence relation between the values $h(a)$. Since the main interest of Theorem \ref{t:rootfinding} is in the regime when $N$ is large and the first term dominates, we will not further address improvements on the second term.
\end{remark}

\begin{remark}
Whereas elsewhere in the paper results may be interpreted in the RAM model, for the algorithms in this section (which may be of some independent interest) we will work in the multitape Turing model \cite[Ch.\,2]{Pap94}. Here we must also account for the cost of data access. This turns out to be negligible, and whenever it is not immediately clear that this is the case,
or where the Turing machine implementation is important to this end,
we will provide a separate explanatory remark.
\end{remark}

Throughout this section we write $M(n) \coloneqq C_1 n \log n$,
where $C_1 > 0$ is a constant chosen so that $n$-bit integers
can be multiplied in at most $M(n)$ bit operations \cite{HvdH21}.
We note that for any $n, n' \in \N$, we have 
\begin{equation}\label{eq:super-add}
   M(n+n') \geq M(n) + M(n').
\end{equation}
If $a$ and $b > 0$ are integers with at most $n$ bits,
we may also compute $a \bmod b$,
i.e., the unique remainder $r$ in the interval $0 \leq r < b$,
in time $O(M(n)) = O(n \log n)$ \cite[\S9.1]{vzGG13}.

For any $n \in \Z$, define
\[ \beta(n) \coloneqq
    \begin{cases}
        \lfloor \log |n| \rfloor + 1, & n \ne 0, \\
        1, & n=0.
    \end{cases} \]
Thus $\beta(n)$ is the number of bits in the binary representation of $|n|$.
The amount of space required to store $n$
on the Turing machine tape is $O(\beta(n))$.
Note that for any sequence $[n_0, \ldots, n_{k-1}]$ of nonzero integers
we have
\begin{equation}\label{eq:beta-add}
\beta\big(\prod_i n_i\big) \leq \sum_i \beta(n_i).
\end{equation}
We will use the following observation repeatedly in this section.
\begin{obs}\label{o:levelAdd}
Let $\cN = [n_0, \ldots, n_{k-1}]$ be a list of nonzero integers.
Given a list of indices $0 = i_0 < i_1 < \cdots < i_r = k$,
consider the decomposition $n_0 \cdots n_{k-1} = n'_0 \cdots n'_{r-1}$
where $n'_j \coloneqq n_{i_j} n_{i_j+1} \cdots n_{i_{j+1}-1}$.
Then, invoking \eqref{eq:super-add} and \eqref{eq:beta-add},
\begin{align*}
   M(\beta(n'_0)) + \cdots + M(\beta(n'_{r-1}))
      & \leq M(\beta(n'_0) + \cdots + \beta(n'_{r-1})) \\
      & \leq M(\beta(n_0) + \cdots + \beta(n_{k-1})).
\end{align*}
\end{obs}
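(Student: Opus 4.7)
The plan is to peel the two inequalities apart and prove each by a direct application of the hypothesised properties of $M$ and $\beta$, along with the monotonicity of $M$ (which follows tacitly from $M(n) = C_1 n \log n$ being nondecreasing for positive $n$).

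For the first inequality, I would iterate the super-additivity property \eqref{eq:super-add}. More precisely, writing $a_j \coloneqq \beta(n'_j)$, a single application gives $M(a_0) + M(a_1) \leq M(a_0 + a_1)$; absorbing one summand at a time yields by induction on $r$ that
\[ M(a_0) + M(a_1) + \cdots + M(a_{r-1}) \leq M(a_0 + a_1 + \cdots + a_{r-1}), \]
which is the first inequality.

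For the second inequality, I would first apply \eqref{eq:beta-add} to each block product: since $n'_j = n_{i_j} n_{i_j+1} \cdots n_{i_{j+1}-1}$ is a product of nonzero integers, $\beta(n'_j) \leq \sum_{\ell = i_j}^{i_{j+1}-1} \beta(n_\ell)$. Summing this telescopically over $j = 0, \ldots, r-1$ gives
\[ \sum_{j=0}^{r-1} \beta(n'_j) \;\leq\; \sum_{i=0}^{k-1} \beta(n_i). \]
Applying the monotonicity of $M$ to both sides yields the required bound $M(\sum_j \beta(n'_j)) \leq M(\sum_i \beta(n_i))$.

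There is no real obstacle here; the only point that warrants a brief comment is that monotonicity of $M$ has not been formally recorded earlier in the section but is manifest from the closed form $M(n) = C_1 n \log n$. I would also note that the hypothesis that each $n_i$ is nonzero ensures each partial product $n'_j$ is nonzero, so \eqref{eq:beta-add} applies to every block without degenerate cases.
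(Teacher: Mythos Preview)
Your proof is correct and follows exactly the route the paper intends: iterate \eqref{eq:super-add} for the first inequality, then apply \eqref{eq:beta-add} blockwise and monotonicity of $M$ for the second. The only thing you add beyond the paper's one-line justification is making the implicit use of monotonicity explicit, which is a fair clarification.
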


\subsection{Bernstein's algorithm}
\label{s:bernstein}

In this section we work extensively with \emph{binary trees}.
If $T$ is such a tree with $k \geq 1$ leaf nodes,
then each non-leaf node has exactly two children,
and we always ensure that $T$ is ``balanced'' in the sense that
its depth is $\log k + O(1)$.

Let $\cA = [a_0, \ldots, a_{k-1}]$ be a list of nonzero integers.
The \emph{product tree} $T(\cA)$ on $\cA$ is a binary tree
defined recursively as follows.
If $k = 1$, then $T(\cA)$ is a singleton node with value $a_0$.
If $k > 1$, then $T(\cA)$ consists of a root node with value
$a_0 \cdots a_{k-1}$,
and left and right children nodes given respectively by
$T(\cA_0)$ and $T(\cA_1)$
where $\cA_0 \coloneqq [a_0, \ldots, a_{\lfloor k/2 \rfloor - 1}]$
and $\cA_1 \coloneqq [a_{\lfloor k/2 \rfloor}, \ldots, a_{k-1}]$.
In particular, the values at the leaf nodes of $T(\cA)$,
reading from left to right, are $a_0, \ldots, a_{k-1}$.

A binary tree is represented on the Turing machine tape as follows.
Suppose that each node has an associated integer value $v$.
For each node, we first store $v$;
then, if this node is a non-leaf node, we recursively store the subtree
rooted at the left child, followed by the subtree rooted at the right child.
Suitable terminating symbols are used to indicate the tree structure.
For example, the product tree on the sequence $[2,3,5,7,11]$
might be represented by the string
\texttt{2310(6(2|3)|385(5|77(7|11))}
(although working of course in binary rather than decimal).

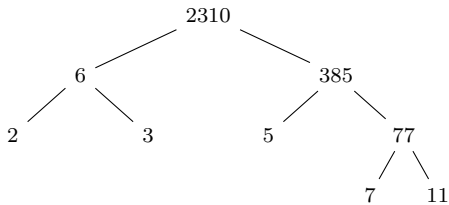
\begin{figure}[h]
\begin{tikzpicture}[
  level distance=8mm,                      
  level 1/.style={sibling distance=34mm}, 
  level 2/.style={sibling distance=18mm}, 
  level 3/.style={sibling distance=9mm},  
  every node/.style={font=\footnotesize},   
  edge from parent/.style={draw, thin},   
  relax/.style={},                         
]
\node {2310}
  child { node {6}
    child { node {2} }
    child { node {3} }
  }
  child { node {385}
    child { node {5} }
    child { node {77}
      child { node {7} }
      child { node {11} }
    }
  };
\end{tikzpicture}
\caption{Product tree on the integers $[2,3,5,7,11]$}
\end{figure}

\begin{prop}[Building a product tree]\label{p:buildTree}
Given a list $\cA\coloneqq[a_0, \ldots, a_{k-1}]$ of nonzero integers,
we may build the product tree $T(\cA)$ in time
\[ O(\ell \log \ell \log k), \qquad
   \ell \coloneqq \sum_{i=0}^{k-1}\beta(a_i). \]
\end{prop}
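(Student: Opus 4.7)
The plan is to realise the recursive definition of $T(\cA)$ by a level-by-level, bottom-up construction: initialise the bottom level to hold the leaves $a_0, \ldots, a_{k-1}$, and at each subsequent step produce the next level upward by multiplying together the appropriate pairs of nodes (as dictated by the recursive split at $\lfloor k/2 \rfloor$, with an occasional single node propagating up when $k$ is not a power of $2$). Each level is stored contiguously on a work tape, so that the pass producing the next level is a single left-to-right scan. At the very end, the specified serialisation format (value, then left subtree, then right subtree) is produced from the level-by-level layout by one depth-first traversal.

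For the cost, let $\ell_j$ denote the total bit size of the values at the $j$-th level above the leaves, so $\ell_0 = \ell$. Since every value at level $j+1$ is the product of the (disjoint) values lying beneath it at level $j$, the inequality \eqref{eq:beta-add} gives $\ell_{j+1} \leq \ell_j \leq \ell$ for every $j$. The multiplications taking level $j$ to level $j+1$ fit exactly into the setup of Observation \ref{o:levelAdd}, applied with $\cN$ equal to the list of level-$j$ values and with blocks corresponding to the pairing; their total cost is therefore at most $M(\ell_j) \leq M(\ell) = O(\ell \log \ell)$. Since the balanced binary tree has depth $\log k + O(1)$, summing over levels yields total multiplication cost $O(\ell \log \ell \log k)$, which is the claimed bound.

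All other overheads should be strictly of lower order. The raw data movement needed to read level $j$ and write level $j+1$ contiguously on the tape is $O(\ell_j) = O(\ell)$ per level, totalling $O(\ell \log k)$; the final reorganisation into the serialised tree format runs in time linear in the total bit size of the tree, which is again $O(\ell \log k)$ (the sum of $\ell_j$ over the $O(\log k)$ levels). Both costs are dwarfed by $O(\ell \log \ell \log k)$. I anticipate no real obstacle here: the only point that requires a moment's care in the multitape Turing model is verifying that each level's pass is genuinely linear—and not logarithmic—in its own bit size, which is immediate from the contiguous-per-level tape layout.
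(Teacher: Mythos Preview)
Your proposal is correct and takes essentially the same approach as the paper: bound the arithmetic cost at each level of the tree by $M(\ell)$ via Observation~\ref{o:levelAdd}, then multiply by the depth $\log k + O(1)$. The only cosmetic difference is that the paper phrases the construction top-down (recursively) and relegates the Turing-model data-access discussion to a separate remark, whereas you build bottom-up and fold those details into the main argument; the complexity analysis is identical.
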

\begin{proof}
If $k = 1$ then we return a single node with value $a_0$.
Otherwise we recursively compute the product trees $T(\cA_0)$ and $T(\cA_1)$
on $\cA_0 \coloneqq [a_0, \ldots, a_{\lfloor k/2 \rfloor - 1}]$
and $\cA_1 \coloneqq [a_{\lfloor k/2 \rfloor}, \ldots, a_{k-1}]$,
and return a new tree consisting of a root node with children
$T(\cA_0)$ and $T(\cA_1)$ and with value
$a_0 \cdots a_{k-1} = (a_0 \cdots a_{\lfloor k/2 \rfloor - 1})
(a_{\lfloor k/2 \rfloor} \cdots a_{k-1})$,
i.e., the product of the values of the children.
The cost of this multiplication is at most $M(\beta(a_0 \cdots a_{k-1}))$.

By Observation \ref{o:levelAdd},
the total cost of arithmetic at any level of the tree
is at most $M(\ell) = O(\ell \log \ell)$.
The number of levels of the tree is $\log k + O(1)$.
\end{proof}

\begin{remark}\label{r:product-turing}
In the Turing model,
to access the values that need to be multiplied for a given node $t$
(the values of $t$'s children),
we need to traverse a distance that is at most
the size of the tree rooted at $t$.
This distance is $O(\ell_t \log k_t)$,
where $\ell_t$ and $k_t$ are the analogues
of $\ell$ and $k$ for the tree rooted at $t$.
The movement cost is thus bounded above by the
cost of arithmetic at $t$, which is $O(\ell_t \log \ell_t)$.
Similar remarks apply to Proposition \ref{p:testDiv} and
Proposition \ref{p:parTestDiv} below.
\end{remark}

\begin{prop}[Finding small divisors]\label{p:testDiv}
Let $\cP\coloneqq[p_0, \ldots, p_{m-1}]$ be a list of distinct primes.
Given as input the product tree $T(\cP)$
and an integer~$n$ in the interval $0 \leq n < p_0\cdots p_{m-1}$,
we may compute the list $[p \in \cP : p\mid n]$ in time
\[ O(\ell \log \ell \log m), \qquad
   \ell \coloneqq \sum_{i=0}^{m-1} \beta(p_i). \]
\end{prop}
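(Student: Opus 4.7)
\medskip

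\noindent\textbf{Proof proposal.} The plan is to walk down the product tree $T(\cP)$, carrying at each node a reduced copy of $n$. Concretely, to each node $t$ of $T(\cP)$ associate its stored value $v_t$ (so $v_t$ is the product of the primes at the leaves below $t$). I will compute recursively, starting from the root, a single integer $n_t$ with $0 \leq n_t < v_t$ and $n_t \equiv n \pmod{v_t}$. At the root we set $n_{\mathrm{root}} \coloneqq n$, which is allowed since the hypothesis says $0 \leq n < p_0\cdots p_{m-1}$. At a non-leaf node $t$ with children $t_0, t_1$, given $n_t$, I compute $n_{t_0} \coloneqq n_t \bmod v_{t_0}$ and $n_{t_1} \coloneqq n_t \bmod v_{t_1}$ and recurse. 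At a leaf $t$ corresponding to $p_i$, we have $v_t = p_i$ and $0 \leq n_t < p_i$ with $n_t \equiv n \pmod{p_i}$; output $p_i$ precisely when $n_t = 0$.

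Correctness is immediate from the invariant $n_t \equiv n \pmod{v_t}$: at the leaf for $p_i$, the reduced residue $n_t$ is zero iff $p_i \mid n$. Note that the values $v_t$ themselves are not recomputed; they are read directly off the input tree $T(\cP)$.

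For the complexity I bound the arithmetic level by level. At a node $t$ with children $t_0, t_1$, the two reductions $n_t \bmod v_{t_j}$ act on integers of bit size $O(\beta(v_t))$, and so cost $O(M(\beta(v_t))) = O(\beta(v_t)\log\beta(v_t))$ by the standard division estimate cited after \eqref{eq:super-add}. Fix a level of the tree and let its nodes be $t_1, \ldots, t_s$; their values $v_{t_1}, \ldots, v_{t_s}$ are pairwise coprime and their product divides $p_0\cdots p_{m-1}$, so $\sum_j \beta(v_{t_j}) \leq \ell$ by \eqref{eq:beta-add}. Applying Observation~\ref{o:levelAdd} (superadditivity of $M$), the total arithmetic at this level is $\sum_j M(\beta(v_{t_j})) \leq M(\ell) = O(\ell\log\ell)$. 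Since the balanced tree has $\log m + O(1)$ levels, the overall arithmetic cost is $O(\ell \log\ell \log m)$.

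The only point requiring a little care is the Turing-machine data-access cost; the hard part (if there is one) is ensuring that reading $v_{t_0}$ and $v_{t_1}$ while standing at $t$ does not inflate the bound. This is handled exactly as in Remark~\ref{r:product-turing}: when processing $t$ we only need to traverse the subtree rooted at $t$, whose total tape length is $O(\ell_t \log m)$ with $\ell_t \coloneqq \sum_{p_i \text{ under } t} \beta(p_i)$, and this movement cost is dominated by the arithmetic cost $O(\ell_t\log\ell_t)$ already accounted for at the descendants of $t$. Finally, writing out the output list of divisors costs $O(\ell)$ bits, which is absorbed. This yields the stated bound $O(\ell\log\ell\log m)$.
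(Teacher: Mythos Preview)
Your proposal is correct and is essentially the same argument as the paper's: both walk down the product tree reducing $n$ modulo the node values (the standard fast multimodular reduction), and both bound the per-level arithmetic by $M(\ell)$ via Observation~\ref{o:levelAdd} and multiply by the depth $\log m + O(1)$. Your treatment of the Turing data-access cost also matches the paper's (which simply defers to Remark~\ref{r:product-turing}).
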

\begin{proof}
The following algorithm is essentially the standard method
for fast multimodular reduction
(see for example \cite[Thm.\,10.24]{vzGG13}).
We work recursively down the tree $T(\cP)$,
appending primes to the output tape as we proceed.
If $m = 1$ (i.e., we are at a leaf node),
then we append $p_0$ to the output if and only if $n = 0$.
Otherwise, if $m > 1$,
let $P_0 \coloneqq p_0 \cdots p_{\lfloor m/2 \rfloor - 1}$
and $P_1 \coloneqq p_{\lfloor m/2 \rfloor} \cdots p_{m-1}$
be the values at the roots of the left and right subtrees $T_0$ and $T_1$.
We compute $n_0 \coloneqq n \bmod P_0$ and $n_1 \coloneqq n \bmod P_1$
at a cost of $O(M(\beta(p_0 \cdots p_{m-1})))$,
and then call the algorithm recursively on $(T_0, n_0)$ and $(T_1, n_1)$.
(Assuming that we always recurse into the left subtree first,
this strategy writes the primes $p_j$ dividing $n$ to the output
in the same order that they appear in the original list $\cP$.)

As in the proof of Proposition \ref{p:buildTree},
the cost of arithmetic at each level is $O(M(\ell)) = O(\ell \log \ell)$,
so the total cost over the whole tree is $O(\ell \log \ell \log m)$.
\end{proof}

\begin{cor}[Finding small divisors of a large integer]\label{c:testDiv}
Given an integer $n \geq 1$ and
a list $\cP\coloneqq[p_0, \ldots, p_{m-1}]$ of distinct primes,
we may compute the list $[p \in \cP : p \mid n]$ in time
\[ O(\beta(n)\log\beta(n) + \ell \log \ell \log m),
   \qquad \ell \coloneqq \sum_{i=0}^{m-1} \beta(p_i). \]
\end{cor}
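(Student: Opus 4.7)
The plan is to reduce directly to Proposition \ref{p:testDiv}. We are not given the product tree, and $n$ is allowed to be much larger than $P \coloneqq p_0 \cdots p_{m-1}$, so we must supply the tree and shrink $n$ before applying that proposition.

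First, I would invoke Proposition \ref{p:buildTree} on $\cP$ to construct $T(\cP)$; this takes time $O(\ell \log \ell \log m)$, and the value stored at the root is $P$. By \eqref{eq:beta-add} we have $\beta(P) \leq \ell$, so $P$ occupies at most $\ell$ bits.

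Second, I would compute $n_0 \coloneqq n \bmod P$. Since $p_i \mid P$ for every $i$, reducing preserves divisibility: $p_i \mid n$ iff $p_i \mid n_0$. Thus the desired output list is unchanged if we work with $n_0$ in place of $n$. Using the cited fast Euclidean reduction, computing $n_0$ takes time $O(M(\beta(n))) = O(\beta(n) \log \beta(n))$ when $\beta(n) \geq \beta(P)$; if $\beta(n) < \beta(P)$ we instead either perform a single comparison and possibly no reduction, which fits in the same bound (and in any case the algorithm needs $\Omega(\beta(n))$ time just to read $n$ from the tape). This step contributes the first term in the claimed complexity.

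Finally, since $0 \leq n_0 < P$, I would apply Proposition \ref{p:testDiv} to the pair $(T(\cP), n_0)$ at cost $O(\ell \log \ell \log m)$, producing the list $[p \in \cP : p \mid n_0] = [p \in \cP : p \mid n]$. Summing the three contributions yields the stated bound. There is no real obstacle here; the only point deserving a remark is that the Turing-model data-access cost for moving between the tape locations of $n$, $P$, the tree nodes, and the output is absorbed into the arithmetic cost by the same argument as in Remark \ref{r:product-turing}, so it need not be tracked separately.
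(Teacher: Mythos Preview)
Your proposal is correct and follows exactly the same route as the paper: build $T(\cP)$ via Proposition~\ref{p:buildTree}, reduce $n$ modulo $P = p_0\cdots p_{m-1}$, and then invoke Proposition~\ref{p:testDiv}. The paper's proof is terser, but you have simply filled in the cost bookkeeping and the (trivial) justification that reducing modulo $P$ preserves divisibility by each $p_i$.
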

\begin{proof}
Compute the product tree $T(\cP)$ using Proposition \ref{p:buildTree}.
Reduce $n$ modulo $p_0\cdots p_{m-1}$
and then apply Proposition \ref{p:testDiv}.
\end{proof}

The next result is the core of Bernstein's algorithm.

\begin{prop}[Finding small divisors of many integers]
\label{p:parTestDiv}
Let $\cN\coloneqq[n_0,\ldots,n_{k-1}]$ be a list of nonzero integers,
and let $\cP\coloneqq[p_0,\ldots, p_{m-1}]$ be a list of distinct primes.
For $0 \leq i < k$, let $S_i \coloneqq [p\in \cP: p\mid n_i]$.
Given as input $\cP$ and the product tree $T(\cN)$,
we may compute the list $[S_0, \ldots, S_{k-1}]$ in time
\[ O(\ell \log \ell \log k \log m + \ell' \log \ell' \log m),
\qquad \ell \coloneqq \sum_{i=0}^{k-1} \beta(n_i),
\quad \ell' \coloneqq \sum_{i=0}^{m-1} \beta(p_i). \]
\end{prop}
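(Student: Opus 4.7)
The plan is to combine the ideas behind Propositions~\ref{p:buildTree} and~\ref{p:testDiv} via a joint traversal of the two trees. First I would invoke Proposition~\ref{p:buildTree} to build $T(\cP)$; this costs $O(\ell' \log \ell' \log m)$ and accounts for the second summand in the claimed bound.

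For the main step, the strategy is to descend $T(\cP)$ while keeping the entire tree $T(\cN)$ ``in scope'', with the following invariant: at each node $s \in T(\cP)$ visited in the descent, every node value $v_t$ of $T(\cN)$ is currently held in its reduction $v_t \bmod Q_s$, where $Q_s$ denotes the value at $s$. To initialise at the root of $T(\cP)$ (where $Q_s = P$), reduce each $v_t$ modulo $P$, skipping those nodes $t$ for which $v_t$ already has fewer bits than $P$. To descend from $s$ to a child $s'$, replace each residue $v_t \bmod Q_s$ by $(v_t \bmod Q_s) \bmod Q_{s'}$, again skipping trivial reductions. Upon reaching a leaf $s$ of $T(\cP)$, which corresponds to a single prime $p_j$, the leaves of the (now-reduced) tree $T(\cN)$ hold $n_i \bmod p_j$; append $p_j$ to $S_i$ whenever this residue vanishes. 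A final pass reorganises the output into the lists $[S_0,\ldots,S_{k-1}]$.

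For the cost, the initial reduction modulo $P$ is analysed level-by-level in $T(\cN)$: the total bit size at each level is at most $\ell$, so by superadditivity (Observation~\ref{o:levelAdd}) a single level contributes $O(M(\ell))$ and all $\log k$ levels together cost $O(M(\ell) \log k)$. The same accounting applies at each further descent step within $T(\cP)$; one additionally invokes superadditivity across the nodes $s$ at a common level of $T(\cP)$, together with the bound $\sum_s \beta(Q_s) \leq \ell'$ holding at each level. Multiplying through the $\log m$ levels of $T(\cP)$ yields the first summand $O(M(\ell) \log k \log m) = O(\ell \log \ell \log k \log m)$.

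I expect the main obstacle to be arranging the cost analysis so that no term proportional to $k\ell'$ ever appears: this demands scrupulously skipping any reduction whose dividend is already smaller than its modulus, so that the level-by-level superadditivity argument remains applicable in both trees. A secondary nuisance is the Turing-model bookkeeping for in-place manipulation of the evolving residue tree, which follows the same template as Remark~\ref{r:product-turing} and contributes no more than a constant factor.
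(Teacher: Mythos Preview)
Your scheme has a real gap: the ``skip when dividend $<$ modulus'' device does not eliminate the $k\ell'$ term you correctly flag as the obstacle. Take $k=m$, let $p_0,\ldots,p_{m-1}$ be distinct primes each of bit size $b\sim\log m$, and let $n_0,\ldots,n_{k-1}$ be distinct primes outside $\cP$, also of bit size~$b$. Then $\ell=\ell'\sim m\log m$ and the target bound is $O(m\log^4 m)$. In your descent of $T(\cP)$ every reduction at a leaf $t$ of $T(\cN)$ is skipped until the very last step, since $\beta(n_i)=b<\beta(Q_s)$ at every non-leaf $s$; but on that final step, for each of the $m$ leaf primes $p_j$ and each of the $k=m$ leaves $n_i$, roughly half the time $n_i>p_j$ and you must perform a $b$-bit division. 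That is $\Theta(m^2)$ nontrivial reductions, far exceeding $O(m\log^4 m)$. The underlying reason is that your invariant---residues $v_t\bmod Q_s$---supports no pruning: the condition $v_t\bmod Q_s\neq 0$ does not certify that the primes below $s$ miss the integers below $t$, so you can never discard a subtree. Indeed, the internal-node residues of $T(\cN)$ that you maintain are never consulted; your algorithm would behave identically on the bare list $[n_0,\ldots,n_{k-1}]$, which already signals that the product-tree structure is not being exploited.

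The paper supplies the missing idea by recursing down $T(\cN)$ rather than $T(\cP)$. At each node $t$ it invokes Corollary~\ref{c:testDiv} to compute the sublist $\cP'_t=[\,p\in\cP_t:p\mid v_t\,]$ and passes only $\cP'_t$ to the children. The crucial estimate is that $\prod_{p\in\cP'_t}p$ divides $v_t$, forcing the prime-side bit length at each child to satisfy $\ell'_{t_j}\leq 2\ell_t$. This converts all $\ell'$-dependence into $\ell$-dependence below the root, yielding the $O(\ell\log\ell\log k\log m)$ term; the $\ell'\log\ell'\log m$ term is paid once, at the root. Without a pruning step of this sort---actually discarding primes that are provably irrelevant to a subtree---no variant of the double descent can beat $\Omega(km)$ in the worst case.
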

\begin{proof}
We recurse down $T(\cN)$.
At each node, we first apply Corollary \ref{c:testDiv} to $\cP$
and $n \coloneq n_0 \cdots n_{k-1}$ (the value at the root of $T(\cN)$)
to compute the list of primes
$\cP' \coloneqq [p \in \cP: p \mid n_0 \cdots n_{k-1}]$.
If $k = 1$, we append $\cP'$ to the output
(this is the $S_i$ corresponding to the current leaf node).
Otherwise, if $k > 1$,
we call the algorithm recursively on $(\cP', T_0)$ and $(\cP', T_1)$,
where $T_0 = T([n_0, \ldots, n_{\lfloor k/2 \rfloor - 1}])$
and $T_1 = T([n_{\lfloor k/2 \rfloor}, \ldots, n_{k-1}])$
are the left and right subtrees.

To analyse the complexity, we must bound the total cost
of the invocations of Corollary \ref{c:testDiv}.
For any node $t$,
let us write $\cN_t$, $\cP_t$, $\cP'_t$, $\ell_t$, $\ell'_t$, $m_t$
for the values of the various symbols during the recursive call at $t$.
The cost incurred at $t$ is then
\[ O(\ell_t \log \ell_t + \ell_t' \log \ell_t' \log m_t). \]
In particular, the cost at the root node is
$O(\ell \log \ell + \ell' \log \ell' \log m)$.
We claim that the total cost over the rest of the tree is
$O(\ell \log \ell \log k \log m)$.
To prove this, let us estimate, for each non-leaf node~$t$,
the sum of the costs incurred at its children $t_0$ and $t_1$.
This is given by
\begin{equation}
\label{eq:sum-tj}
   \sum_{j=0,1} O(\ell_{t_j} \log \ell_{t_j} +
      \ell'_{t_j} \log \ell'_{t_j} \log m_{t_j}).
\end{equation}
The key observation is now that
\[ \prod_{p \in \cP_{t_j}} p \mathrel{\big\vert} \prod_{n \in \cN_t} n, \]
which follows from the construction of $\cP_{t_j}\coloneqq [p \in \cP_t : p \mid \prod_{n \in \cN_t} n]$.
Since the $n_i$ are nonzero, we deduce that
$\prod_{p \in \cP_{t_j}} p \leq \prod_{n \in \cN_t} |n|$, and hence that
\begin{multline*}
   \ell'_{t_j} = \sum_{p \in \cP_{t_j}} \beta(p)
   \leq \sum_{p \in \cP_{t_j}} (\log p + 1)
   \leq \sum_{p \in \cP_{t_j}} 2 \log p
   = 2 \log \prod_{p \in \cP_{t_j}} p
   \\
   \leq 2 \log\prod_{n \in \cN_t} |n|
   \leq 2 \sum_{n \in \cN_t} \log |n|
   \leq 2 \sum_{n \in \cN_t} \beta(n) = 2 \ell_t.
\end{multline*}
Clearly $\ell_{t_j} \leq \ell_t$ and $m_{t_j} \leq m$,
so \eqref{eq:sum-tj} becomes simply $O(\ell_t \log \ell_t \log m)$.
Summing over all non-leaf nodes $t$,
and applying Observation \ref{o:levelAdd} in the usual way,
we conclude that the total cost over all non-root nodes is
$O(\ell \log \ell \log k \log m)$.
\end{proof}

\subsection{Proof of Theorem \ref{t:rootfinding}}
\label{s:rootfinding-for-real}

We first give a straightforward estimate for the cost of evaluating
$h \in \Z[y]$ at a single point.
\begin{lemma}\label{l:polyEval}
Let $d, H, N \geq 2$ be integers.
Let $h \in \Z[y]$ be a polynomial of degree~$d$
with coefficients of size at most $H$.
Given an integer $a$ such that $0 \leq a < N$,
we may compute $h(a) \in \Z$ in time
\[ O(B \log B \log(dH)), \qquad B \coloneqq \log H + d \log N. \]
\end{lemma}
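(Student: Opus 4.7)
My plan is to evaluate $h(a)$ using Estrin's scheme (binary divide-and-conquer) together with a one-off precomputation of powers of $a$. First I would pad $h$ with zero coefficients so that $h$ has $D$ coefficients, where $D$ is the smallest power of $2$ with $D \geq d+1$ (so $D \leq 2(d+1)$), and then precompute $a^{2^j}$ for $0 \leq j < \log D$ by repeated squaring. By the super-additivity of $M$ (see \eqref{eq:super-add}), this precomputation costs at most $M(d \log N) = O(B \log B)$.

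Using these precomputed powers I would evaluate $h(a)$ recursively: given a polynomial $g$ of degree $< D'$ (with $D'$ a power of $2$), split $g = g_0 + y^{D'/2} g_1$, recursively compute $g_0(a)$ and $g_1(a)$, and return $g_0(a) + a^{D'/2} g_1(a)$. At level $j$ of the recursion ($0 \leq j \leq \log D$) there are $2^j$ subproblems, and each combining multiplication has cost $M(B_j)$ where $B_j \coloneqq \log H + (d/2^j) \log N$, since the factors have bit sizes $O((d/2^{j+1})\log N)$ and $O(\log H + (d/2^{j+1})\log N)$. Bounding each $\log B_j$ by $\log B$ and summing,
\[
   \sum_{j=0}^{\log D} 2^j M(B_j) \;\ll\; \log B \cdot \sum_{j=0}^{\log D} \bigl(2^j \log H + d \log N\bigr) \;\ll\; \log B \cdot \bigl(d \log H + d \log d \log N\bigr).
\]
Since $\log N \geq 1$ and $d \log N \leq B$, the right-hand side is at most $O(B \log B \log(dH))$; this also absorbs the $O(B \log B)$ precomputation cost and the $O(d \log H)$ cost of reading the coefficients.

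I expect the main subtlety to be the extraction of the factor $\log(dH)$ in place of the factor $d$ that a straightforward Horner evaluation would give (whose cost is $O(d B \log B)$). Precomputing the powers of $a$ globally is what enables each level's combining cost to depend on the \emph{current} degree $d/2^j$ rather than on $d$; the resulting geometric telescoping then replaces $d$ by $\log(dH)$. A minor additional point is the padding to a power of $2$, needed so that each $a^{D'/2}$ encountered in the recursion is one of the precomputed powers $a^{2^j}$.
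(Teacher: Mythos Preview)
Your proposal is correct and is essentially identical to the paper's proof: both zero-pad to a power of two, precompute the dyadic powers $a^{2^j}$ by repeated squaring, evaluate via the binary split $g = g_0 + y^{D'/2} g_1$ (Estrin's scheme), and carry out the same level-by-level summation $\sum_j 2^j M(\log H + (d/2^j)\log N)$ to extract the $\log(dH)$ factor. The only cosmetic differences are notation ($D$ versus $n$) and that the paper computes one extra power $a^n$ that is never actually used.
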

\begin{proof}
Let $n$ be the smallest power of two such that $n > d$.
As a preliminary step, we compute the powers $a, a^2, a^4, \ldots, a^n$
by repeated squaring.
By \eqref{eq:super-add} the cost of this step is
\begin{multline*}
   \sum_{i=0}^{\log(n/2)} M(\beta(a^{2^i}))
   \ll \sum_{i=0}^{\log(n/2)} M(2^i \log N)
   \leq M\Bigg(\sum_{i=0}^{\log(n/2)} 2^i\log N \Bigg) \\
   < M(n \log N) \ll M(d \log N) \leq M(B) \ll B \log B.
\end{multline*}

Now write $h(y) = h_0 + h_1 y + \cdots + h_{n-1} y^{n-1}$,
i.e., zero-pad $h$ to length $n$.
Then $h(y) = h^0(y) + y^{n/2} h^1(y)$ where $h^0, h^1 \in \Z[y]$
have degree less than $n/2$.
We compute $h(a)$ by applying this decomposition repeatedly,
i.e., after recursively computing $h^0(a)$ and $h^1(a)$,
we obtain $h(a) = h^0(a) + a^{n/2} h^1(a)$,
using the precomputed value for $a^{n/2}$.
The computation of $h(a)$ thus forms a binary tree of depth $\log n$.

To analyse the complexity, note that
\[ |h(a)| = |h_0 + h_1 a + \cdots + h_{n-1} a^{n-1}|
   \leq H (1 + N + \cdots + N^{n-1}) \leq H N^n, \]
so $\beta(h(a)) \leq \log H + n \log N + O(1)$.
The cost of obtaining $h(a)$ from $h^0(a)$, $h^1(a)$ and $a^{n/2}$ is thus
$O(M(\log H + n \log N))$.
Summing over the tree, the total cost is
\begin{align*}
   O\left( \sum_{i=0}^{\log n}
      2^i M\left(\log H + \frac{n}{2^i}\log N\right) \right)
   & = O\left( \sum_{i=0}^{\log n}
      2^i \left(\log H + \frac{n}{2^i}\log N\right) \log B \right) \\
   & = O((n \log H + n \log n \log N) \log B) \\
   & = O(d \, (\log H + \log d \log N) \log B) \\
   & = O(d \log N \, (\log H + \log d) \log B) \\
   & = O(B \log B \log (dH)).  \qedhere
\end{align*}
\end{proof}

We will of course also need to compute the primes up to $N$.
The following result is proved in \cite{Ser16b}
(see \cite{Ser16a} for an English translation).
\begin{lemma}\label{l:compute-primes}
The list of primes $p < N$ may be computed in time $O(N \log N)$.
\end{lemma}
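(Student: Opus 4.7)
The plan is to invoke a careful implementation of the Sieve of Eratosthenes on the multitape Turing machine. Allocate one tape of $N$ bits, with the $i$-th bit initially set to represent ``candidate prime'' for $2 \leq i < N$. Sweep this tape from left to right; upon reaching an unmarked position $p$, emit $p$ to an output tape and then mark every $p$-th subsequent position as composite. By Mertens' theorem, the total number of marking steps, summed over the primes $p \leq \sqrt{N}$ (the only ones whose composite multiples are less than $N$), is $\sum_{p \leq \sqrt{N}} \lfloor N/p \rfloor = O(N \log \log N)$, comfortably within the target budget.

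The main obstacle is that, in the multitape Turing model, a naive implementation incurs a head-movement cost proportional to $p$ at each stride, which would blow up the total cost well past $O(N \log N)$. The fix I would use is the standard block-sieve variant: first recursively compute the primes up to $\sqrt{N}$, which by induction takes at most $O(\sqrt{N}\log N) = o(N\log N)$ time and can be stored on an auxiliary tape; then process the interval $[1,N]$ in $O(\sqrt{N})$ contiguous blocks of length $\sqrt{N}$, sieving each block using only the precomputed small primes. Within a single block of length $\sqrt{N}$, the total head travel for each small prime $p$ is $O(\sqrt{N})$, and summing over all primes $p \leq \sqrt{N}$ and all blocks yields total head-movement cost $O(N \cdot \pi(\sqrt{N})/\sqrt{N}) = O(N / \log N)$ times an $O(\log N)$ overhead for updating binary counters on the work tape, so the overall bound is $O(N \log N)$.

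The hard part will be the precise bookkeeping of tape-head movements, in particular checking that the recursive call on $\sqrt{N}$ fits within the budget and that reading the small-primes tape does not introduce an additional logarithmic factor. The detailed Turing-machine construction and its complexity analysis are carried out in \cite{Ser16b} (with an English translation in \cite{Ser16a}), which I would simply cite for the final bound.
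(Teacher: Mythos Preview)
Your proposal and the paper's proof coincide: the paper gives no argument at all and simply cites \cite{Ser16b} (English translation \cite{Ser16a}), which is exactly what you do in your final sentence. So the approach is identical.

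One caveat on your informal sketch: the head-movement tally is miscomputed. You correctly note that within a block of length $\sqrt{N}$ the head travel for a single prime $p\leq\sqrt{N}$ is $O(\sqrt{N})$, but summing over $\pi(\sqrt{N})$ primes and $O(\sqrt{N})$ blocks gives $O(N\cdot\pi(\sqrt{N}))=O(N^{3/2}/\log N)$, not $O(N\cdot\pi(\sqrt{N})/\sqrt{N})$ as you wrote; the block sieve as you describe it does not by itself get down to $O(N\log N)$ on a multitape Turing machine. Since you ultimately defer to the cited reference for the detailed construction this does not invalidate your proof, but you should drop or correct the heuristic complexity count if you keep the sketch.
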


Now we may prove the main theorem of this section.
\begin{proof}[{Proof of Theorem \ref{t:rootfinding}}]
We begin by invoking Lemma \ref{l:polyEval} to evaluate $h(a)$ for
$a = 0, 1, \ldots, N-1$ in time $O(NB \log B \log(dH))$.
As shown in the proof of Lemma~\ref{l:polyEval},
the bit size of each $h(a)$ is $O(B)$,
so the total bit size of the list $[h(a)]_{a=0}^{N-1}$ is $O(NB)$.
Let $\cN \coloneqq [n_i]_{i=0}^{k-1}$ be the list obtained
from $[h(a)]_{a=0}^{N-1}$ by removing those elements for which $h(a) = 0$.
Since $h$ has at most $d$ roots in $\Z$,
at most $d$ values are removed in this way.

We next compute the product tree $T(\cN)$ in time $O(NB \log(NB) \log N)$
using Proposition \ref{p:buildTree},
and the list $\cP_N$ of primes up to $N$ in time $O(N \log N)$
via Lemma \ref{l:compute-primes}.
The main step is now to use Proposition \ref{p:parTestDiv}
to compute the list $[S_i]_{i=0}^{k-1}$
where $S_i \coloneqq [p\in \cP_N : p \mid n_i]$.
By the Prime Number Theorem we have
$\sum_{p \in \cP_N} \beta(p) \ll \sum_{p < N} \log p = O(N)$,
so the cost of invoking Proposition \ref{p:parTestDiv} is
\[ O(NB \log(NB) \log^2 N + N \log^2 N) = O(NB \log(NB) \log^2 N). \]
Note that each $S_i$ has bit size $O(\beta(n_i))$,
so the total bit size of the list $[S_i]_{i=0}^{k-1}$ is $O(NB)$.
Finally, we construct a list $[S'_a]_{a=0}^{N-1}$
where $S'_a \coloneqq \{p \in \cP_N : p \mid h(a)\}$
by simply copying across the appropriate $S_i$,
and inserting $S'_a = \cP_N$ for those values of $a$ with $h(a) = 0$.
Since there are at most $d$ such values of $a$,
the bit size of the list $[S'_a]_{a=0}^{N-1}$ is $O(NB + dN) = O(NB)$.
The desired sets $Z_p(h)$ are deduced immediately from $[S'_a]_{a=0}^{N-1}$.
\end{proof}

\begin{remark} \label{r:transpose}
Converting from $[S'_a]_{a=0}^{N-1}$ to $[Z_p(h)]_{p < N}$
may be viewed as a ``transpose'' operation.
In the Turing model,
this may be achieved by rewriting $[S'_a]_{a=0}^{N-1}$
as a list of pairs $(a,p)$ ordered lexicographically by $(a,p)$,
sorting the list lexicographically by $(p,a)$,
and then rewriting again as $[Z_p(h)]_{p < N}$.
Using a merge sort algorithm \cite[p. 152]{R90},
the cost of the sorting step is $O(NB \log(NB))$,
which is negligible.
\end{remark}

\section{Factoring polynomials --- the Galois case}\label{s:galois}

Our goal in this section is to prove the following theorem. 

\begin{theorem}\label{t:galois-fact}
Let $d, H, N \geq 2$ be integers.
Let $f \in \Z[x]$ be a monic irreducible polynomial of degree~$d$
with coefficients of size at most $H$.
Assume that $f$ is Galois,~i.e., $f$ splits into linear factors over
$K \coloneqq \Q[x]/\gen{f}$.
Then we may deterministically compute the factorisations of
$\bar f \in \F_p[x]$ for all $p<N$ in time
\[ \pi(N)\cdot O\big(d^3 \log^5 N + (d \log H)^{O(1)} \log^4 N \big). \]
\end{theorem}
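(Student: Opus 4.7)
Let $\theta$ denote the image of $x$ in $K=\Q[x]/\gen{f}$. Since $f$ is Galois, I can enumerate $G\coloneqq \Gal(K/\Q)$ by factoring $f(y)\in K[y]$ using a deterministic polynomial-time factorisation algorithm over a number field (cf.\ the auxiliary results referenced in the appendices): each automorphism $\sigma\in G$ is uniquely determined by sending $\theta$ to one of the $d$ roots of $f$ in $K$. For each $\sigma$, the submodule $B_\sigma = K^\sigma\cap \Z[\theta]$ is the kernel of $\sigma-\id$ acting on $\Z[\theta]$, and a $\Z$-basis $\BB_\sigma$ is extracted by Hermite normal form on the integer matrix of $\sigma-\id$ in the power basis $1,\theta,\dots,\theta^{d-1}$. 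Finally, for each $u\in\BB_\sigma$, writing $\tilde u\in\Z[x]$ for its degree-$<d$ lift satisfying $\tilde u(\theta)=u$, I compute the resultant $h_{\sigma,u}(a) = \res_x(f(x),\tilde u(x)-a)\in\Z[a]$ by a standard subresultant algorithm. All of this can be done deterministically in time $(d\log H)^{O(1)}$, and yields $\deg h_{\sigma,u}\leq d$ with $\log\|h_{\sigma,u}\|_\infty = (d\log H)^{O(1)}$.

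\textbf{Amortised root-finding and per-prime gcds.}
I next apply Theorem~\ref{t:rootfinding} to each $h_{\sigma,u}$ to compute $Z_p(h_{\sigma,u})$ for all primes $p<N$ simultaneously. The total number of pairs $(\sigma,u)$ is $\sum_\sigma|\BB_\sigma| = \sum_\sigma [K^\sigma:\Q]\leq d^2$, and with $B\coloneqq \log\|h_{\sigma,u}\|_\infty + d\log N = (d\log H)^{O(1)}\log N$, each invocation costs at most $\pi(N)\cdot O(d\log^5 N + (d\log H)^{O(1)}\log^4 N)$; summing over all $(\sigma,u)$ produces the announced $\pi(N)\cdot O(d^3\log^5 N + (d\log H)^{O(1)}\log^4 N)$ total. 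For each prime $p<N$ I then reduce the precomputed data modulo $p$ and, for every triple $(\sigma,u,a)$ with $a\in Z_p(h_{\sigma,u})$, compute $\gcd(\bar f,\bar u-a)\in\F_p[x]$. Using $|Z_p(h_{\sigma,u})|\leq d$, the number of gcds per prime is at most $d^3$, and each one is polynomial in $d$ and $\log p$, so the per-prime gcd cost is absorbed into the second error term $(d\log H)^{O(1)}\log^4 N$.

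\textbf{Assembly, correctness, and the main obstacle.}
The factorisation of $\bar f$ is recovered by refinement: starting from the trivial partition $\{\bar f\}$, for every $(\sigma,u,a)$ I replace each current factor $g$ by the pair $\gcd(g,\bar u-a),\,g/\gcd(g,\bar u-a)$ whenever that split is nontrivial. Correctness reduces to a global analogue of Berlekamp's separation argument: for any two distinct irreducible factors $\bar f_i,\bar f_j$ of $\bar f$, some triple $(\sigma,u,a)$ must satisfy that $\bar u-a$ vanishes in the $\bar f_i$-component of $R_p$ but not in the $\bar f_j$-component. I take $\sigma$ to be the Frobenius element attached to a prime $\pp_i$ of $\OO_K$ lying above $p$ and corresponding to $\bar f_i$; invoking Lemma~\ref{l:bases-mod-p} (available because $p\nmid \delta_f d$) to see that the mod-$p$ reductions of $\BB_\sigma$ span the fixed subalgebra $B_{\bar\sigma} = \ker(\bar\sigma-\id)\subseteq R_p$, Berlekamp's original separation argument applied inside $B_{\bar\sigma}$ then produces the required $u$ and $a$, since $\bar\sigma$ is the identity on the $\bar f_i$-component but not on the $\bar f_j$-component. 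The principal obstacle is precisely this reduction-mod-$p$ compatibility encoded in Lemma~\ref{l:bases-mod-p}: one must check that the spanning property of $\BB_\sigma$ is preserved under the isomorphisms $\Z[\theta]/p\Z[\theta]\cong \OO_K/p\OO_K\cong R_p$, which is exactly where the coprimality assumption $p\nmid\delta_f d$ is used.
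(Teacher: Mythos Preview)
Your approach is essentially identical to the paper's: precompute the $\BB_\sigma$ and the resultants $h_{\sigma,u}$, invoke Theorem~\ref{t:rootfinding} on each $h_{\sigma,u}$, and then refine by gcds, with correctness supplied by the Frobenius-element separation argument together with Lemma~\ref{l:bases-mod-p}.

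Two small points. First, you never say what happens for the finitely many primes $p\mid \delta_f d$; Lemma~\ref{l:bases-mod-p} and the separation argument are unavailable there. The paper dispatches these (at most $(d\log H)^{O(1)}$ of them) one at a time with Shoup's $d^{O(1)}p^{1/2+\eps}$ algorithm, and also uses Shoup when $N\leq d\log H$ to keep the bookkeeping in Theorem~\ref{t:rootfinding} clean. Second, your justification ``$\bar\sigma$ is the identity on the $\bar f_i$-component but not on the $\bar f_j$-component'' is not correct as stated: $\bar\sigma$ acts as the $p$-th power map on the $\bar f_i$-component (not the identity), and it may well coincide with Frobenius on the $\bar f_j$-component too. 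The paper's actual argument (Lemma~\ref{l:separation}) instead notes that the idempotent $\beta$ which is $1$ on the $i$-th component and $0$ elsewhere lies in $B_{\bar\sigma}$, expands $\beta$ in the spanning set $\{\bar u_\ell\}$, and compares the $i$-th and $j$-th projections to force some $\pi_j(\bar u_\ell)\neq a_\ell\coloneqq\pi_i(\bar u_\ell)\in\F_p$. This is what you presumably intend, but it is not quite ``Berlekamp's original argument'' since $\pi_j(\bar u_\ell)$ need not lie in $\F_p$.
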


\begin{remark}
We note that the first term in the sum in Theorem \ref{t:galois-fact}
dominates if $N$ is large enough compared to $d$ and $H$,
so in this regime the coefficient size $H$
barely has any influence on the complexity.
\end{remark}

\subsection{Setup}

For the rest of Section \ref{s:galois},
we use the following setup and notation.
Let $f \in \Z[x]$ and $K \coloneqq \Q[x]/\gen{f}$
be as in Theorem \ref{t:galois-fact}.
We will write $K = \Q(\theta)$ where $\theta$ is a root of $f$ (so $\theta \coloneqq x + \gen{f} \in K$), and we will assume that $f$ splits over $K$.

Let $\OO_K$ be the ring of integers of~$K$.
We have that $\Z[\theta]$ is a subring of $\OO_K$
with $\rank_\Z \Z[\theta] = [K:\Q] = d$, so $\Z[\theta]$ is an order in $\OO_K$. We note that in general we are unable to compute the ring~$\OO_K$. Finding this ring is about as difficult as factoring the discriminant~$\delta_f$, which we cannot afford, even allowing probabilistic or heuristic algorithms, let alone deterministically. This is why our statements and algorithms work with the subring $\Z[\theta] \subseteq \OO_K$.

Let $G \coloneqq \Gal(K/\Q)$.
Since $f$ is assumed to be Galois,
the extension $K/\Q$ is Galois and $|G| = [K:\Q] = d$.
For $\sigma \in G$ we write $K^\sigma$ for the subfield of $K$
fixed by $\sigma$,
and we define
\begin{equation} \label{eq:B-sigma}
B_\sigma \coloneqq K^\sigma \cap \Z[\theta]
   = \{ \alpha \in \Z[\theta] : \sigma(\alpha) = \alpha \}.
\end{equation}
Then $B_\sigma$ is a subring of $\OO_K$,
and indeed an order in $\OO_{K^\sigma} = \OO_K \cap K^\sigma$.
By Galois theory we have
$[K:K^\sigma] = |\langle \sigma \rangle| = \ord \sigma$,
so the rank of $B_\sigma$ is given by
\[ \rank_\Z B_\sigma = [K^\sigma:\Q] = \frac{d}{\ord \sigma}. \]
We denote by $\BB_\sigma$ a $\Z$-basis for $B_\sigma$;
we assume that one such basis is chosen at the outset for each $\sigma \in G$
(see Proposition \ref{p:global}),
and remains fixed throughout the discussion.

We now consider reductions modulo primes.
For any prime $p$, let $\bar f \in \F_p[x]$ denote
the reduction of $f$ modulo $p$,
and let
\[ R_p \coloneqq \frac{\F_p[x]}{\gen{\bar f}} \]
be the corresponding quotient algebra.
Note that there are natural isomorphisms
\[ R_p \cong \frac{\Z[x]}{\gen{f,p}} \cong \frac{\Z[\theta]}{\gen{p}}. \]

Let $\delta_f \in \Z$ be the discriminant of $f$. We have $\delta_f \neq 0$ since $f$ is irreducible.
We say that a prime $p$ is \emph{good} (for $f$) if it lies in the set
\[ \cP \coloneqq \{ p \text{ prime} : p \nmid \delta_f d \}. \]
If $p$ is good, Proposition \ref{p:algNT} shows that
$\bar f$ factorises as $\bar f = f_1 \cdots f_r$
where the $f_i \in \F_p[x]$ are distinct irreducibles of the same degree
(and where of course $r$ may depend on $p$).
Hence by the Chinese remainder theorem there is an isomorphism
\begin{equation} \label{eq:CRT}
R_p \cong \frac{\F_p[x]}{\gen{f_1}} \oplus \cdots
         \oplus \frac{\F_p[x]}{\gen{f_r}}.
\end{equation}
The components $\F_p[x]/\gen{f_i}$ are finite fields
of the same cardinality $p^{\deg f_i}$.

\subsection{Separating sets}

\begin{definition}
Let $p \in \cP$ and let $\bar f = f_1 \cdots f_r$
be the factorisation of $\bar f \in \F_p[x]$ into distinct irreducibles.
A \emph{separating set} for $\bar f$ is a set $S \subseteq R_p$
such that for any $i, j \in \{1, \ldots, r\}$, $i \neq j$,
there exists some $g \in S$ such that
$g \in \gen{f_i}$ and $g \notin \gen{f_j}$.
\end{definition}

\begin{remark}
In the literature,
separating sets are usually required to be
subsets of the Berlekamp subalgebra
(see for example \cite{Cam83}, \cite{Sho90}).
Our definition is more lenient, allowing any polynomials in $R_p$.
\end{remark}

The aim of this section is to describe a collection of separating sets,
one for each good prime $p$,
that may be computed simultaneously for many $p$
via the root-finding results of Section \ref{s:rootfinding}.
The first step is the following key lemma,
which adapts the separation criterion from Berlekamp's algorithm
to our setting.
The idea of the proof is to replace
the Berlekamp subalgebra $B_\phi \subseteq R_p$
by an analogous subalgebra $B_{\bar\sigma} \subseteq R_p$
for a suitable $\sigma \in G$.

\begin{lemma} \label{l:separation}
Let $p \in \cP$ and let $\bar f = f_1 \cdots f_r$
be the factorisation of $\bar f \in \F_p[x]$ into distinct irreducibles.
Then for any $i, j \in \{1, \ldots, r\}$, $i \neq j$,
there exists some $\sigma \in G$, $u \in \BB_\sigma$ and
$a \in \{0, \ldots, p-1\}$ such that
$\bar u - \bar a \in \langle f_i \rangle$ and
$\bar u - \bar a \notin \langle f_j \rangle$.
\end{lemma}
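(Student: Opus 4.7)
My plan is to take $\sigma = \sigma_i \in G$ to be the Frobenius automorphism associated with $\pp_i$, the prime ideal of $\OO_K$ above $p$ corresponding to the factor $f_i$ under the identification $R_p \cong \OO_K/p\OO_K$ (valid because $p \nmid \delta_f$ implies $p \nmid [\OO_K:\Z[\theta]]$). Recall that $\sigma_i$ is well-defined as the unique element of the decomposition group $D_{\pp_i}\subseteq G$ acting on the residue field $\OO_K/\pp_i \cong \F_{p^f}$ as the $p$th power map. The separating element will come from the primitive idempotent of $R_p$ cutting out the $i$-th factor.

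The key observation is that the primitive idempotent $\mathbf{1}_i \in R_p$, whose coordinates under \eqref{eq:CRT} are $(\mathbf{1}_i)_k = \delta_{ik}$, lies in $B_{\bar\sigma_i} \coloneqq \ker(\bar\sigma_i - \id)$. Indeed, $\bar\sigma_i$ acts on $R_p \cong \bigoplus_k \OO_K/\pp_k$ by permuting the summands according to the induced action of $\sigma_i$ on $\{\pp_k\}$ (which stabilises $\pp_i$ and merely shuffles the remaining primes among themselves) and by acting on the fixed $i$-th summand $\OO_K/\pp_i$ as $x \mapsto x^p$. The idempotent $\mathbf{1}_i$ has $i$-th coordinate $1$ (fixed by Frobenius) and every other coordinate $0$ (preserved under any permutation), hence $\bar\sigma_i(\mathbf{1}_i) = \mathbf{1}_i$.

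With this in hand, Lemma \ref{l:bases-mod-p} guarantees that the reductions $\{\bar u : u \in \BB_{\sigma_i}\}$ $\F_p$-span $B_{\bar\sigma_i}$, so we may write $\mathbf{1}_i = \sum_k c_k \bar u_k$ for some $c_k \in \F_p$ and $u_k \in \BB_{\sigma_i}$. Projecting to the $i$-th and $j$-th components of \eqref{eq:CRT} yields $1 = \sum_k c_k (\bar u_k)_i$ and $0 = \sum_k c_k (\bar u_k)_j$, so at least one index $k$ must satisfy $(\bar u_k)_i \neq (\bar u_k)_j$. Set $u \coloneqq u_k$. Since $\bar u \in B_{\bar\sigma_i}$, its $i$-th coordinate $(\bar u)_i$ is fixed by the Frobenius action on $\OO_K/\pp_i$ and therefore lies in $\F_p$; let $a \in \{0, \ldots, p-1\}$ be the integer with $\bar a = (\bar u)_i$. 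Then $\bar u - \bar a$ has vanishing $i$-th component (so lies in $\langle f_i\rangle$) and nonzero $j$-th component (so lies outside $\langle f_j\rangle$), completing the separation.

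The step requiring the most care is justifying the componentwise description of $\bar\sigma_i$ used above, which is delicate in the non-abelian regime since the Frobenius elements of different primes $\pp_k$ are merely conjugate rather than equal. All we genuinely need, however, is that $\sigma_i$ stabilises $\pp_i$ and acts on the corresponding residue field as $x \mapsto x^p$, and that it permutes the other primes among themselves---immediate consequences of the definition of Frobenius and of the transitivity of the $G$-action on $\{\pp_k\}$. Once these points are cleanly in place, the rest of the argument is a short linear-algebra step, and the abelian case (where $B_{\bar\sigma_i} \cong \F_p^r$ splits along the CRT decomposition) serves as a transparent sanity check.
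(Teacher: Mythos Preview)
Your proof is correct and follows essentially the same route as the paper: choose $\sigma$ to be the Frobenius at $\pp_i$, observe that the primitive idempotent $\mathbf{1}_i$ (the paper calls it $\beta$) lies in $B_{\bar\sigma}$, expand it in the reduced basis via Lemma~\ref{l:bases-mod-p}, and compare the $i$-th and $j$-th projections to locate a separating $u$. The only cosmetic point is ordering: you assert $(\bar u_k)_i \neq (\bar u_k)_j$ before noting that $(\bar u_k)_i \in \F_p$, whereas the paper first establishes $a_\ell \coloneqq \pi_i(\bar u_\ell) \in \F_p$ so that the cross-field comparison is well-posed --- but the content is identical.
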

\begin{proof}
Note that each $\sigma \in G$ can be made to
act on $R_p$ in the following way.
By Proposition \ref{p:algNT},
since $p \in \cP$,
the inclusion $\Z[\theta] \to \OO_K$ induces an isomorphism
$\OO_K/\gen{p} \cong \Z[\theta]/\gen{p}$ ($\cong R_p$).
Each $\sigma \in G$ is an automorphism of $\OO_K$, so gives rise to induced automorphisms
\[ \bar\sigma \colon R_p \to R_p, \qquad \sigma \in G. \]
(Of course $\sigma$ does not necessarily map
$\Z[\theta]$ into $\Z[\theta]$,
so for $p \notin \cP$ there may be no sensible map
$\bar\sigma \colon R_p \to R_p$.) Now choose $\sigma \in G$ to be the \emph{Frobenius element}
associated to the factor $f_i$ for the given index $i$, so (Proposition \ref{p:algNT}) $\bar \sigma$ fixes $\gen{f_i}$ and
\begin{equation} \label{eq:frobenius}
   \bar\sigma(\alpha) = \alpha^p \pmod{f_i}, \qquad \alpha \in R_p.
\end{equation}
Moreover, $\bar\sigma$ permutes the other ideals $\{\gen{f_k}\}_{k \neq i}$.

Next, letting $\beta$ be the unique element of $R_p$ such that
\begin{align*}
\beta & = 1 \pmod{f_i}, \\
\beta & = 0 \pmod{f_k}, \quad k \neq i,
\end{align*}
so by the above discussion $\bar\sigma(\beta) = \beta$. In particular, $\beta$ lies in the subalgebra
\[ B_{\bar\sigma} \coloneqq \ker(\bar\sigma - \id)
   = \{ \alpha \in R_p : \bar\sigma(\alpha) = \alpha \} \subseteq R_p. \]
Now recall that $\BB_\sigma$ is a fixed $\Z$-basis for the
corresponding ``global'' subalgebra (see \eqref{eq:B-sigma})
\[ B_\sigma = \{\alpha \in \Z[\theta] : \sigma(\alpha) = \alpha\}, \]
say $\BB_\sigma = \{u_1, \ldots, u_m\}$.
It is clear that the modulo $p$ reduction map $\Z[\theta] \to R_p$
maps $B_\sigma$ into $B_{\bar\sigma}$.
Using again the assumption that $p \in \cP$,
Lemma \ref{l:bases-mod-p} says
that in fact the reductions $\bar u_1, \ldots, \bar u_m \in B_{\bar\sigma}$
span $B_{\bar\sigma}$.
We may therefore write $\beta$ in the form
\begin{equation} \label{eq:linear-combination}
\beta = \beta_1 \bar u_1 + \cdots + \beta_m \bar u_m,
   \qquad \beta_\ell \in \F_p.
\end{equation}
Since $\bar u_\ell \in B_{\bar\sigma}$,
we have $\bar u_\ell = \bar\sigma(\bar u_\ell) = \bar u_\ell^p \pmod{f_i}$
by \eqref{eq:frobenius}.

Now, for $k \in \{1, \ldots, r\}$,
let $\pi_k \colon R_p \to \F_p[x]/\gen{f_k}$
denote the projection onto the $k$-th component of \eqref{eq:CRT}. 
Taking first the case $k = i$, we see from the above that the elements
\[ a_\ell \coloneqq \pi_i(\bar u_\ell) \in \F_p[x]/\gen{f_i},
   \qquad \ell = 1, \ldots, m \]
satisfy $a_\ell = a_\ell^p$
in the finite field $\F_p[x]/\gen{f_i}$, so  $a_\ell \in \F_p$.
Reducing \eqref{eq:linear-combination} modulo $\gen{f_i}$, we obtain
\[ 1 = \beta_1 a_1 + \cdots + \beta_m a_m. \]

On the other hand, reading \eqref{eq:linear-combination} modulo $f_j$
(i.e., taking $k = j$), we obtain
\[ 0 = \beta_1 \pi_j(\bar u_1) + \cdots + \beta_m \pi_j(\bar u_m). \]
If $\pi_j(\bar u_\ell) = a_\ell$ for all $\ell$,
these two relations lead to the contradiction $0 = 1$;
thus there must exist some $\ell$ such that $\pi_j(\bar u_\ell) \neq a_\ell$.
Taking $u \coloneqq u_\ell$ and
$a \in \{0, \ldots, p-1\}$ to be a lift of $a_\ell$,
we get $\bar u - \bar a = 0 \pmod{f_i}$ and
$\bar u - \bar a \neq 0 \pmod{f_j}$ as desired.
\end{proof}

\begin{remark}
Let us emphasise the point that in the final lines of the previous proof we are only able to make the comparison between $a_\ell$ and $\pi_j(\bar u_\ell)$ because the former lies in the prime subfield of $\F_p[x]/\gen{f_i}$. Fixing an isomorphism $\psi: \F_p[x]/\gen{f_i} \to \F_p[x]/\gen{f_j}$, we make use of the fact that if $a\in R_p$ is constant, then $\psi(\pi_i(a))=\pi_j(a)$ as elements of $\F_p$. 
\end{remark}

To use Lemma \ref{l:separation} in a way that doesn't require iterating over $a\in \{0, \ldots, p-1\}$ for each $p<N$ (which of course would be too expensive),
the key observation is that the values of $a$ produced by the lemma
actually arise as the roots modulo $p$
of a small collection of polynomials in $\Z[x]$,
independently of $p$. 
These polynomials are defined as follows.
For any $u \in \Z[\theta]$,
let $\tilde u(x)$ denote the lift of $u$ to $\Z[x]$. That is, $\tilde u(x)$ is the unique polynomial of degree $< d$
such that $\tilde u(\theta) = u$.
For $\sigma \in G$ and $u \in \BB_\sigma$, define
\[ h_{\sigma,u}(y) \coloneqq \res_x(f(x), \tilde u(x) - y) \in \Z[y], \]
where $\res_x$ is the resultant with respect to $x$
(see Appendix \ref{s:res}).

Then we have:
\begin{prop} \label{p:separatingSets}
Let $p \in \cP$. Then
\[ S_p \coloneqq \bigcup_{\substack{
      \sigma \in G \\ u \in \BB_\sigma, u \notin \Z}}
   \left\{ \bar u - \bar a : a \in Z_p(h_{\sigma,u}) \right\}
   \subseteq R_p \]
is a separating set for $\bar f \in \F_p[x]$.
Moreover we have $|S_p| \leq d^3$.
\end{prop}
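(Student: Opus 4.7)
The plan is to verify both assertions of the proposition in turn. For the separating property, fix distinct indices $i, j \in \{1, \ldots, r\}$ and invoke Lemma \ref{l:separation} to obtain $\sigma \in G$, $u \in \BB_\sigma$, and $a \in \{0, \ldots, p-1\}$ such that $\bar u - \bar a \in \langle f_i \rangle$ and $\bar u - \bar a \notin \langle f_j \rangle$. One may immediately discard the case $u \in \Z$: were $u$ a rational integer, $\bar u - \bar a$ would be a scalar in $R_p$, hence either zero (lying in every $\langle f_k \rangle$) or a unit (lying in none), which in either case contradicts the two membership conditions above. So the triple $(\sigma, u, a)$ may be assumed eligible for inclusion in $S_p$, provided that $a$ lies in $Z_p(h_{\sigma, u})$.

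To confirm $a \in Z_p(h_{\sigma, u})$, I would appeal to the standard fact that since $f$ is monic, reduction modulo $p$ commutes with the resultant, giving $h_{\sigma, u}(a) \bmod p = \res_x(\bar f, \tilde u(x) - a) \in \F_p$. The condition $\bar u - \bar a \in \langle f_i \rangle$ in $R_p$ says precisely that $f_i \mid \tilde u(x) - a$ in $\F_p[x]$, so $\gcd(\bar f, \tilde u(x) - a)$ is nonconstant, which forces the resultant on the right to vanish. Hence $h_{\sigma, u}(a) \equiv 0 \pmod p$, so $\bar u - \bar a$ appears in $S_p$ and separates $f_i$ from $f_j$ in the sense required.

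For the cardinality bound, I would combine three elementary estimates. First, $|G| = [K:\Q] = d$ by the Galois hypothesis. Second, for each $\sigma \in G$, $|\BB_\sigma| = \rank_\Z B_\sigma = d/\ord \sigma \leq d$. Third, writing $f(x) = \prod_{k=1}^d (x - \alpha_k)$ over its splitting field and using monicity, one has $h_{\sigma, u}(y) = \prod_{k=1}^d (\tilde u(\alpha_k) - y)$, a polynomial of degree $d$ in $y$, so $|Z_p(h_{\sigma, u})| \leq d$. Multiplying these bounds yields $|S_p| \leq d \cdot d \cdot d = d^3$. I do not foresee any serious obstacle: both parts follow from Lemma \ref{l:separation} combined with standard resultant facts, and the main subtlety is the harmless exclusion of constant $u$, which is handled by the triviality argument in the first paragraph.
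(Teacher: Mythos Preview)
Your proof is correct and follows essentially the same route as the paper's: invoke Lemma~\ref{l:separation}, rule out $u \in \Z$ by a triviality argument, verify $a \in Z_p(h_{\sigma,u})$ via vanishing of the resultant (the paper cites Lemmas~\ref{l:resModP} and~\ref{l:resultant} for this, which amount to your ``monic $f$ makes reduction commute with the resultant'' claim), and multiply the three bounds $|G| = d$, $|\BB_\sigma| \le d$, $|Z_p(h_{\sigma,u})| \le d$. Your use of the product formula $h_{\sigma,u}(y) = \prod_k(\tilde u(\alpha_k) - y)$ to read off the degree is a slightly cleaner alternative to the paper's inspection of the Sylvester matrix, but the argument is otherwise the same.
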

(We remind the reader that $Z_p(h_{\sigma,u})$
denotes the set of $a \in \{0, \ldots, p-1\}$ such that
$h_{\sigma,u}(a) = 0 \pmod p$; see \eqref{eq:Zp}.)
\begin{proof}
Let $\bar f = f_1 \cdots f_r$ be the factorisation
into distinct irreducibles.
Given $i, j \in \{1, \ldots, r\}$ with $i \neq j$,
Lemma \ref{l:separation} says that there exists
$\sigma \in G$, $u \in B_\sigma$ and $a \in \{0, \ldots, p-1\}$
such that $\bar u - \bar a \in \gen{f_i}$
and $\bar u - \bar a \notin \gen{f_j}$.

We claim that $u \notin \Z$, i.e., $\deg \tilde u > 0$.
Indeed if $u \in \Z$, then $\bar u - \bar a \in \F_p$, and $\pi_i(\bar u - \bar a) = \pi_j(\bar u - \bar a)$, a contradiction. 

It remains to show that $a \in Z_p(h_{\sigma,u})$,
i.e., that $h_{\sigma,u}(a) = 0 \pmod p$.
By definition
\[ h_{\sigma,u}(y) = \res_x(f(x), \tilde u(x) - y) \in \Z[y]. \]
By Lemma \ref{l:resModP},
to show that $h_{\sigma,u}(a) = 0 \pmod p$
it suffices to show that
\[ \res_x(\bar f(x), \bar{\tilde u}(x) - \bar a) =0\]
in $\F_p$. But this follows from Lemma \ref{l:resultant},
as $\bar f(x)$ and $\bar{\tilde u}(x) - \bar a$
share a nontrivial common factor in $\F_p[x]$, namely $f_i$.

To estimate $|S_p|$,
first observe (by examining the Sylvester matrix) that
$h_{\sigma,u}(y)$ has leading coefficient $\pm1$
and degree at most $\deg f = d$.
Therefore it has at most $d$ roots modulo $p$.
Moreover we have $|G| = d$ and $|\BB_\sigma| = \rank_\Z B_\sigma \leq d$.
It follows immediately that $|S_p| \leq d^3$.
\end{proof}

\subsection{Algorithms}

In this section we describe the algorithms implementing
Theorem \ref{t:galois-fact}. 

We begin by computing some global data depending only on $f$.
\begin{prop}[Compute global data] \label{p:global}
Given integers $d, H \geq 2$ and $f \in \Z[x]$
as in Theorem \ref{t:galois-fact},
in time $(d \log H)^{O(1)}$ we may compute the following:
\begin{enumerate}[label={\upshape(\alph*)}]
\item
The discriminant $\delta_f \in \Z$.
\item
For each $\sigma \in G$,
a matrix $M_\sigma \in M_d(\Z)$
giving the action of $\delta_f \sigma$ on $\Z[\theta]$
with respect to the monomial basis $1, \theta, \ldots, \theta^{d-1}$.
\item
For each $\sigma \in G$,
a basis $\BB_\sigma$ for the subalgebra $B_\sigma$
defined by \eqref{eq:B-sigma}.
\item
The polynomials $h_{\sigma,u} \in \Z[x]$,
for all $\sigma \in G$ and $u \in \BB_\sigma$.
\end{enumerate}
\end{prop}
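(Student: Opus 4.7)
The plan is to address each of the four items in turn, using standard tools from computational algebraic number theory with careful bit-size tracking.

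First, for (a), I compute $\delta_f = (-1)^{d(d-1)/2}\det(\operatorname{Syl}(f,f'))$ as the determinant of the Sylvester matrix of $f$ and $f'$, which is an integer matrix of dimension $2d-1$ whose entries have bit size $O(\log H + \log d)$. This determinant can be computed deterministically in $(d\log H)^{O(1)}$ time by standard means (for example, by modular reduction combined with CRT, or by Bareiss's algorithm).

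Next, for (b), I enumerate $G$ by observing that, since $f$ is Galois, each $\sigma \in G$ is uniquely determined by its image $\sigma(\theta) \in K$, which must itself be a root of $f$ lying in $K$. I find all such roots by factoring $f(x)$ over $K$ using a deterministic polynomial-time number-field factorisation algorithm (Landau's algorithm, building on LLL); this returns each root $\theta_\sigma$ as a $\Q$-linear combination $\sum_{j=0}^{d-1} c_{j,\sigma}\,\theta^j$ with coefficients of bit size $(d\log H)^{O(1)}$. Since $\sigma(\theta)\in\OO_K$ and the standard containment $\delta_f\cdot\OO_K \subseteq \Z[\theta]$ holds (because $n \coloneqq [\OO_K:\Z[\theta]]$ satisfies $n\,\OO_K\subseteq \Z[\theta]$ and $n^2 \mid \delta_f$), we have $\delta_f\,\sigma(\theta^j)\in\Z[\theta]$ for each $j$. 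I compute the powers $\sigma(\theta)^j$ in $K$ for $j = 0, \ldots, d-1$, clear denominators by multiplication by $\delta_f$, and read off the columns of $M_\sigma \in M_d(\Z)$ in the power basis.

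For (c), I use the fact that $B_\sigma$, after identifying $\Z[\theta]$ with $\Z^d$ via the power basis, coincides with the $\Z$-kernel of the integer matrix $M_\sigma - \delta_f I$: indeed, because $K$ is a field, $\delta_f\sigma(\alpha) = \delta_f\alpha$ is equivalent to $\sigma(\alpha) = \alpha$. A $\Z$-basis $\BB_\sigma$ of this kernel is then extracted via Hermite Normal Form, which runs deterministically in polynomial time in the dimension and entry bit length. For (d), for each pair $(\sigma,u)$ with $u\in \BB_\sigma$ I form the Sylvester matrix of $f(x)$ and $\tilde u(x) - y$, a matrix of size $O(d)\times O(d)$ with entries in $\Z[y]$ of degree at most one in $y$ and polynomial integer coefficients, and compute its determinant in $\Z[y]$ in $(d\log H)^{O(1)}$ time (for instance by interpolating at $d+1$ integer values of $y$ and applying integer-matrix determinant algorithms pointwise). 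Since $|G| = d$ and $|\BB_\sigma|\leq d$, there are at most $d^2$ such resultants, keeping the total within the target budget.

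The main obstacle is the root-finding step in (b): enumerating $G$ reduces to finding all roots of $f$ in the number field $K$, and for this I must appeal to the non-trivial but well-established deterministic polynomial-time factorisation of polynomials over number fields. The most delicate book-keeping is verifying that the roots returned by this subroutine, and hence the matrices $M_\sigma$, the bases $\BB_\sigma$, and the resultants $h_{\sigma,u}$, all have bit size polynomial in $d$ and $\log H$; these bounds are standard but must be extracted carefully from the literature on heights of algebraic numbers in a power basis.
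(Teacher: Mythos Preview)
Your proposal is correct and follows essentially the same route as the paper: the discriminant via the Sylvester determinant, enumeration of $G$ by factoring $f$ over $K$ via Landau/LLL (the paper packages this as Corollary~\ref{c:galois-galois-f}), the bases $\BB_\sigma$ via a Hermite normal form computation on the integer matrix $M_\sigma - \delta_f I$ (Lemma~\ref{l:compute-zbasis}), and the resultants $h_{\sigma,u}$ by evaluation at $d+1$ integer points followed by interpolation (Lemma~\ref{l:resComplexity}). Your remark that the delicate part is the bit-size tracking through the number-field factorisation step is exactly right, and the paper handles this by deferring to the cited results of Landau and LLL.
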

\begin{proof}
We will defer the details of this proof to the various appendices. Regarding (a), recall that $\delta_f = (-1)^{d(d-1)/2} \res_x(f,f')$, so to compute $\delta_f$ we can compute the determinant of the corresponding Sylvester matrix. It is clear from Appendix \ref{s:res} that this may be done with $(d\log H)^{O(1)}$ bit operations. Part (b) relies on the ability to factorise polynomials over number fields: Theorems \ref{t:factor-over-z}, \ref{thm:numberfield-fact}. The computation of the matrices $M_\sigma$ is then addressed in Corollary \ref{c:galois-galois-f}. The run time bound for part (c) is proven in Lemma \ref{l:compute-zbasis}. Finally, the computation of the polynomials $h_{\sigma, u}=\res_x(f(x),u(x)-\cdot)$ is addressed in Lemma \ref{l:resComplexity}.
\end{proof}

Next we compute some preliminary data depending on $N$.
\begin{lemma} \label{l:reductions}
Given integers $d, H, N \geq 2$ and $f \in \Z[x]$
as in Theorem \ref{t:galois-fact},
in time
\[ (d \log H)^{O(1)} N \log N \]
we may compute the set
\[ \cP_N \coloneqq \{p \in \cP : p < N\} \]
and the reduced polynomials $\bar f \in \F_p[x]$
for all primes $p < N$.
\end{lemma}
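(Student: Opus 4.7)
The plan is to assemble the lemma from three ingredients already available: the prime-enumeration result of Lemma~\ref{l:compute-primes}, the discriminant computation of Proposition~\ref{p:global}(a), and elementary multiprecision modular arithmetic.

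First, I would invoke Lemma~\ref{l:compute-primes} to write down the complete list of primes $p < N$ at cost $O(N \log N)$. Second, I would call Proposition~\ref{p:global}(a) to obtain $\delta_f \in \Z$ in time $(d\log H)^{O(1)}$; a Hadamard-style bound applied to the Sylvester matrix of $f$ and $f'$ shows that $|\delta_f| \leq (dH)^{O(d)}$, so the integer $m \coloneqq \delta_f \cdot d$ has bit size $\beta(m) = (d\log H)^{O(1)}$.

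Third, for each prime $p < N$ I would perform two short computations: reduce $m$ modulo $p$ to test whether $p \mid \delta_f d$, and reduce each of the $d+1$ coefficients of $f$ modulo $p$. Standard fast multiprecision division \cite[\S9.1]{vzGG13} bounds each of these at $(d \log H)^{O(1)}$ bit operations per prime, giving a total contribution of $\pi(N) \cdot (d \log H)^{O(1)} \ll (d \log H)^{O(1)} \cdot N/\log N$. Discarding those $p$ flagged as bad yields $\cP_N$, and the retained coefficient reductions furnish $\bar f \in \F_p[x]$ for every $p < N$.

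Combining the three contributions gives a total runtime of $O(N \log N) + (d\log H)^{O(1)} \cdot N/\log N$, which is comfortably absorbed by the claimed bound $(d \log H)^{O(1)} N \log N$. There is really no obstacle here — the lemma is a bookkeeping exercise combining earlier components — but the one point meriting brief care is the Hadamard-style bit size bound on $\delta_f$, which is what ensures the per-prime modular reductions fit inside the $(d\log H)^{O(1)}$ per-prime budget.
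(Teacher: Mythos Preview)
Your proposal is correct and follows essentially the same approach as the paper: enumerate primes via Lemma~\ref{l:compute-primes}, obtain $\delta_f$ from Proposition~\ref{p:global}, and then for each prime perform the divisibility test and coefficient reductions. The only minor imprecision is that your per-prime bound of $(d\log H)^{O(1)}$ silently omits an additive $O(d\log p)$ term (needed at least to read $p$ and write the $d+1$ residues when $\log p$ exceeds $(d\log H)^{O(1)}$); the paper tracks this as a multiplicative $(\log p)^{1+\eps}$ factor, but either way the summed contribution is $O(dN)$ and is absorbed by the stated bound.
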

\begin{proof}
We may assume that the output of Proposition \ref{p:global} is known.
To compute $\cP_N$,
we first find all the primes up to $N$ via Lemma \ref{l:compute-primes}
in time $O(N \log N)$,
and then simply test whether each $p < N$ divides $\delta_f d$.
The latter takes time
\[ (\log(\delta_f d) + \log p)^{1+\eps}
   < (d \log H)^{O(1)} (\log p)^{1+\eps} \]
for each $p$ (see Lemma \ref{l:disc-bound} for the bound on the size of $\delta_f$). Similarly, we may compute each $\bar f \in \F_p[x]$ in time
\[ d (\log H + \log p)^{1+\eps}
   < (d \log H)^{O(1)} (\log p)^{1+\eps}, \]
so the total over all primes $p < N$ is at most
$(d \log H)^{O(1)} N (\log N)^\eps$.
\end{proof}

Next we use the root-finding results from Section \ref{s:rootfinding}
to compute separating sets for all $p \in \cP_N$.
\begin{prop}[Compute separating sets] \label{p:computeSeparatingSets}
There is an algorithm with the following properties.
Its input consists of integers $d, H, N \geq 2$
and a polynomial $f \in \Z[x]$ as in Theorem \ref{t:galois-fact}.
Its output is the list of separating sets $S_p \subseteq R_p$
(defined in Proposition \ref{p:separatingSets})
for all $p \in \cP_N$.
Assuming that $N > d \log H$, its running time is
\[ O\big( d^3 N \log^4 N + (d \log H)^{O(1)} N \log^3 N\big). \]
\end{prop}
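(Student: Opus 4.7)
The plan is to combine the precomputation from Proposition \ref{p:global} and Lemma \ref{l:reductions} with one application of Theorem \ref{t:rootfinding} per resultant polynomial $h_{\sigma,u}$, followed by a cheap assembly step to write down the $S_p$. First I would invoke Proposition \ref{p:global} in time $(d\log H)^{O(1)}$ to obtain $G$, the $\Z$-bases $\BB_\sigma$, and the polynomials $h_{\sigma,u} \in \Z[y]$. In parallel, Lemma \ref{l:reductions} delivers $\cP_N$ together with the reductions $\bar f \in \F_p[x]$ for all $p<N$ in time $(d\log H)^{O(1)} N \log N$, which lies within the second claimed term since $N > d\log H$.

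The heart of the proof is to feed each $h_{\sigma,u}$ (with $\sigma \in G$, $u \in \BB_\sigma$, $u \notin \Z$) to Theorem \ref{t:rootfinding}, thereby producing $Z_p(h_{\sigma,u})$ for all $p<N$ simultaneously. Each such polynomial has degree at most $d$; its coefficients have bit size $(d\log H)^{O(1)}$, a bound following from the coefficient bounds on $\BB_\sigma$ supplied by Lemma \ref{l:compute-zbasis} together with the standard Sylvester bound on resultants (Lemma \ref{l:resComplexity}). With these parameters, direct substitution into Theorem \ref{t:rootfinding} and the hypothesis $N > d\log H$ give a per-polynomial cost of $O(dN\log^4 N + (d\log H)^{O(1)} N \log^3 N)$. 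Since there are at most $|G| \cdot \max_\sigma|\BB_\sigma| \leq d^2$ pairs $(\sigma,u)$, summing over them yields $O(d^3 N\log^4 N) + (d\log H)^{O(1)} N \log^3 N$, matching the claimed bound.

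For the final assembly, for each good prime $p$ and each pair $(\sigma,u)$ I would reduce $u$ modulo $p$ once to produce $\bar u \in R_p$ (cost $(d\log H)^{O(1)} \log p$), and then for each $a \in Z_p(h_{\sigma,u})$ form $\bar u - \bar a$ in $R_p$ at cost $O(d\log p)$. Proposition \ref{p:separatingSets} bounds the number of such $a$ by $d$; with $d^2$ pairs and $\pi(N)$ primes, the total assembly cost is $(d\log H)^{O(1)} N$, comfortably absorbed by the second claimed term. The main obstacle in the whole argument is ensuring the coefficient-size bound on $h_{\sigma,u}$ is polynomial in $d\log H$, which is what keeps the $B$ parameter in Theorem \ref{t:rootfinding} small; once this is in hand, the rest of the proof is a straightforward tabulation of the pieces already assembled.
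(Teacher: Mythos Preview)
Your proposal is correct and follows essentially the same route as the paper: precompute via Proposition~\ref{p:global} and Lemma~\ref{l:reductions}, apply Theorem~\ref{t:rootfinding} to each of the $O(d^2)$ resultant polynomials $h_{\sigma,u}$ using the bounds $\deg h_{\sigma,u}\leq d$ and $\log H' \leq (d\log H)^{O(1)}$ together with $N>d\log H$ to control $B$, and sum. Your explicit assembly step is a small elaboration that the paper leaves implicit (its post-proof remark only addresses the regrouping-by-$p$ cost in the Turing model), but it is correct and easily absorbed.
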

\begin{proof}
We may assume that the output of Proposition \ref{p:global}
and Lemma \ref{l:reductions} is known.
For each $\sigma \in G$ and $u \in \BB_\sigma$ ($u \notin \Z$),
we apply Theorem \ref{t:rootfinding} to compute $Z_p(h_{\sigma,u})$
for all $p < N$.
The complexity is
\[ O\left(NB \log(NB) \log^2 N + NB \log B \log(d'H') \right),
    \qquad B \coloneqq \log H' + d' \log N, \]
where $d' \coloneqq \deg h_{\sigma,u}$
and $H'$ is the size of the coefficients of $h_{\sigma,u}$.
From the definition of the resultant as the determinant of a Sylvester matrix (Appendix \ref{s:res}) we see that $d' \leq d$. Furthermore we have $\log H' < (d \log H)^{O(1)}$ (in fact we have seen that $h_{\sigma,u}$ may be computed with $(d\log H)^{O(1)}$ bit operations, so the bit sizes of its coefficients are certainly of the form $(d \log H)^{O(1)}$). Thus,
\[ B < (d \log H)^{O(1)} + d \log N,
   \qquad \log(d'H') < (d \log H)^{O(1)}. \]
Using the hypothesis $N > d \log H$,
we obtain furthermore $B < N^{O(1)}$ and hence $\log B \ll \log N$.
The cost of each invocation of Theorem \ref{t:rootfinding} is therefore
\begin{align*}
& O\big(NB \log^3 N + (d \log H)^{O(1)} NB \log N\big) \\
& = O\big(N (d \log N + (d \log H)^{O(1)})
      (\log^3 N + (d \log H)^{O(1)} \log N)\big) \\
& = O\big(d N \log^4 N + (d \log H)^{O(1)} N \log^3 N).
\end{align*}
The number of pairs $(\sigma, u)$ is $O(d^2)$,
so the total cost is
\[ O\big(d^3 N \log^4 N + (d \log H)^{O(1)} N \log^3 N). \qedhere \]
\end{proof}

\begin{remark}
In the Turing model, to construct the desired $S_p$ in the above proof,
we must reorganise the sets $Z_p(h_{\sigma,u})$
to be grouped by $p$ rather than by $(\sigma,u)$.
This may be effected by a sorting strategy as in Remark \ref{r:transpose}.
The number of triples $(\sigma,u,p)$ is
$O(d^2 \sum_{p < N} 1) = O(d^2 N / \log N)$,
and the total bit size of the $Z_p(h_{\sigma,u})$ is
$O(d^2 \sum_{p < N} d \log p) = O(d^3 N)$.
Therefore the cost of the sort is
\[ O\big(d^3 N \log(d^2 N/\log N)\big) = O(d^3 N \log N), \]
where we have again used the hypothesis $N > d \log H$.
\end{remark}

The next result shows how to recover the complete factorisation of $\bar f$
from knowledge of the separating set $S_p$.
\begin{lemma}[Recover factorisations] \label{l:refinement}
Let $p \in \cP$.
Given $\bar f \in \F_p[x]$
and the separating set $S_p \subseteq R_p$ for $\bar f$,
we may find the complete factorisation of $\bar f$ in time
\[ d^{O(1)} (\log p)^{1+\eps}. \]
\end{lemma}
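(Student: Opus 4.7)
The strategy is a standard gcd-based refinement in the spirit of Berlekamp. Maintain a list $\cF$ of pairwise coprime monic polynomials in $\F_p[x]$ whose product is $\bar f$, initialised as $\cF = \{\bar f\}$. Iterate over the elements $g \in S_p$; each $g$ is an element of $R_p = \F_p[x]/\gen{\bar f}$ represented by its canonical lift in $\F_p[x]$ of degree less than $d$. For each $h \in \cF$, compute $g_h \coloneqq g \bmod h$ and then $e \coloneqq \gcd(h, g_h)$. If $1 < \deg e < \deg h$, replace $h$ in $\cF$ by the two pieces $e$ and $h/e$. After processing every $g \in S_p$, return $\cF$ as the claimed factorisation into irreducibles.

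\textbf{Correctness.} By definition of a separating set, for any two distinct irreducible factors $f_i, f_j$ of $\bar f$ there exists $g \in S_p$ with $f_i \mid g$ and $f_j \nmid g$. At the moment this $g$ is processed, the unique $h \in \cF$ divisible by $f_i f_j$ satisfies that $\gcd(h, g \bmod h)$ is divisible by $f_i$ but not by $f_j$, so the corresponding split step separates $f_i$ and $f_j$ into distinct entries of $\cF$. Since this holds for every pair of distinct irreducible factors, the procedure terminates with $\cF$ equal to the complete set of irreducible factors of $\bar f$.

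\textbf{Complexity and main difficulty.} At every intermediate stage $\sum_{h \in \cF} \deg h = d$, so in particular $|\cF| \leq d$. For a fixed $g \in S_p$, reducing $g$ modulo each $h \in \cF$ and computing the corresponding gcds costs $\tilde O(d)$ arithmetic operations in $\F_p$ by standard fast polynomial arithmetic and fast polynomial gcd (see, e.g., \cite[Ch.\,11]{vzGG13}); each such operation in $\F_p$ costs $(\log p)^{1+\eps}$ bit operations, so the total work per $g$ is $d^{O(1)} (\log p)^{1+\eps}$. Since $|S_p| \leq d^3$ by Proposition~\ref{p:separatingSets}, the overall cost is $d^3 \cdot d^{O(1)} (\log p)^{1+\eps} = d^{O(1)} (\log p)^{1+\eps}$, as required. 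There is no genuine obstacle here: the argument is a direct application of the separating property together with standard fast arithmetic, and care is needed only to make sure that one reduces $g$ modulo each current factor $h$ before computing the gcd so as to keep the degree bounds (and hence the per-gcd cost) under control.
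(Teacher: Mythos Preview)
Your proof is correct and essentially identical to the paper's argument: both maintain a coprime partial factorisation of $\bar f$, refine it by taking gcds with each $g \in S_p$, and conclude via the separating property together with the bound $|S_p| \leq d^3$ from Proposition~\ref{p:separatingSets}. One small slip: the splitting condition should read $0 < \deg e < \deg h$ rather than $1 < \deg e < \deg h$; as written, a linear gcd would be discarded and the procedure would fail precisely when $\bar f$ splits into linear factors.
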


\begin{proof}
Suppose that we have found a partial factorisation of $\bar f$,
say $\bar f = f_1 \cdots f_m$,
where the factors $f_i \in \F_p[x]$ have positive degree
but are not necessarily irreducible.
Given $g \in S_p$, we may attempt to refine the factorisation
by computing $s_i \coloneqq \gcd(\tilde g, f_i)$ for each $i$,
where $\tilde g \in \F_p[x]$ denotes a lift of $g$ with $\deg \tilde g < d$.
If $\deg s_i > 0$ and $\deg s_i < f_i$,
then we succeed in improving the factorisation,
replacing $f_i$ by the nontrivial factors $s_i$ and $f/s_i$.
Repeating this process for all $g \in S_p$,
the separation property implies that we will finally arrive at
the complete factorisation of $\bar f$.

To analyse the complexity,
observe that for each $g \in S_p$ we compute at most $d$ GCDs
and quotients of polynomials in $\F_p[x]$, each of degree at most $d$. Thus the cost for each $g$ is certainly $d^{O(1)} (\log p)^{1+\eps}$ (cf. \cite{vzGG13}).
We have $|S_p| \leq d^3$ by Proposition \ref{p:separatingSets},
so the complexity bound follows.
\end{proof}

Finally we may prove the main result of this section.
\begin{proof}[Proof of Theorem \ref{t:galois-fact}]
Recall that we have budgeted 
\[ O\big(d^3 N\log^4 N + (d\log H)^{O(1)} N\log^3 N\big)\]
bit operations to compute the factorisations of $\bar f \in \F_p[x]$ for all $p< N$. We may assume that the output of Lemma \ref{l:reductions} is known so we have the set $\cP_N$ and the reduced polynomials $\bar f \in \F_p[x]$
for all $p < N$.

If $N \leq d \log H$,
we process the primes $p < N$ one at a time
using Shoup's algorithm \cite{Sho90}
to factor each $\bar f$ in time $d^{O(1)} p^{1/2+\eps}$.
The number of primes is at most $N \leq d \log H$,
so the total cost is $(d \log H)^{O(1)} N^{1/2+\eps}$.

Henceforth assume that $N > d \log H$.
Applying Proposition \ref{p:computeSeparatingSets},
we may compute separating sets $S_p \subseteq R_p$
for all $p \in \cP_N$ in time
\[ O\big( d^3 N \log^4 N + (d \log H)^{O(1)} N \log^3 N\big). \]
We then recover the complete factorisations for these primes
via Lemma \ref{l:refinement} in time
\[ \sum_{p \in \cP_N}
   d^{O(1)} (\log p)^{1+\eps}
   < d^{O(1)} N (\log N)^\eps. \]
To handle those primes $p < N$ with $p \mid \delta_f d$ we note that $\log(\delta_f d) < (d \log H)^{O(1)}$ (Lemma \ref{l:disc-bound}),
so the number of such primes is at most $(d \log H)^{O(1)}$.
Using Shoup's algorithm as above,
the cost of factoring $\bar f \in \F_p[x]$ for these primes is
\[ \sum_{\substack{p < N \\ p \mid \delta_f d}} d^{O(1)} p^{1/2+\eps}
      < (d \log H)^{O(1)} N^{1/2+\eps}.   \qedhere \]
\end{proof}

\section{Factoring polynomials --- the general case}\label{s:general}
Let $f\in \Z[x]$ be monic and irreducible of degree $d$. Let $L$ be the splitting field of $f$ and let $m$ be the degree of $L$ over $\Q$. Recall that Theorem \ref{t:main} claims that $f$ may be deterministically factorised modulo $p$ for all $p<N$ with at most $\pi(N)\cdot (m\log H)^{O(1)}\log^5 N$ bit operations, where $O(1)$ conceals an absolute constant. 

As advertised in the introduction, the algorithm for factorising a general $f$ proceeds by computing a minimal polynomial $g$ for a primitive element of $L$, and then factorising $g$ modulo $p$ for all $p<N$ using the algorithm introduced in the previous section. From this we are able to recover the factorisation of $\bar f$ into irreducibles for each $p$. 

We note that after some preliminary global computations (e.g., computing $g$), and after invoking Theorem \ref{t:galois-fact}, all computations are local in the sense that we simply repeat a process at each $p<N$. In particular, there are no speedups obtained via amortisation in this section. We are ready to outline the full algorithm which proves Theorem \ref{t:main}; we will not labour aspects of the algorithm which have been already been addressed in detail in Section \ref{s:galois}. 

\begin{proof}[Proof of Theorem \ref{t:main}]
We begin by computing via Corollary~\ref{c:general-galois-f} a minimal polynomial $g$ for a primitive element $\beta$ for the splitting field $L$ of $f$, and the factorisation of $f$ into linear factors in $L$. This takes $(m\log H)^{O(1)}$ bit operations. Specifically, we obtain polynomials $h_1,\ldots, h_d\in \Q[y]$ such that
\begin{equation}\label{eq:f-linears} 
f(x) = (x-h_1(\beta))\cdots (x-h_d(\beta)) \in L[x].
\end{equation}
We furthermore have that the bit sizes of the heights of the coefficients of $h_i$ are bounded by $(m\log H)^{O(1)}$, and of course that $h_i(\beta)\in \OO_L$ for each $i$.\footnote{We reiterate that we cannot afford to compute $\OO_L$. The polynomials $h_i$, however, are sufficient.} This completes the ``global'' preprocessing.

Next, use Theorem \ref{t:galois-fact} to factorise $g$ modulo $p$ for all $p<N$ with \[\pi(N)\cdot (m\log H)^{O(1)}\log^5N\] bit operations.  For each such $p< N$, choose arbitrarily $\bar g_0$, an irreducible factor of $g$ modulo $p$. In fact, this is all we will need of the output of Theorem \ref{t:galois-fact}. 

As in the Galois case, a small number of primes (those with $p\mid \delta_g$) will need to be dealt with separately, and we will again do so with Shoup's algorithm. Towards noting that the number of such exceptional primes is suitably small, note that since $g$ can be computed with $(m\log H)^{O(1)}$ bit operations, the bit sizes of its coefficients are certainly of size at most $(m\log H)^{O(1)}$. Thus Lemma \ref{l:disc-bound} says that  $\log |\delta_g| \leq (m\log H)^{O(1)}$, and so the number of primes dividing $\delta_g$ is at most $(m\log H)^{O(1)}$. We may therefore argue exactly as in Section \ref{s:galois} to factorise $f$ modulo these primes using Shoup's algorithm within the claimed number of operations. 

For the remainder of the argument, fix a prime $p\nmid \delta_g$. Recall that the assumption $p\nmid \delta_g$ guarantees that $p$ does not divide any of the denominators that appear in the $h_i$ in \eqref{eq:f-linears}. Reducing \eqref{eq:f-linears} modulo $p$ we obtain 
\begin{equation}\label{eq:f-lin-modp} \bar f(x)  = (x-\bar h_1(\beta))\cdots (x-\bar h_d(\beta)) \in \frac{\OO_L}{\gen{p}}[x].
\end{equation}
Recall also that $\OO_L/\gen{p} \cong \Z[\beta]/\gen{p}$ (see Appendix \ref{a:alg-nt}), and that this isomorphism is induced by inclusion $\Z[\beta] \hookrightarrow \OO_L$, so the expression obtained from (\ref{eq:f-linears}) by simply reducing the coefficients of each $h_i$ modulo $p$ gives a factorisation of $\bar f$ in $(\Z[\beta]/\gen{p})[x]$. The cost of this computation is $O((m\log H)^{O(1)}\log^{1+\eps}p)$ bit operations. Finally, observing as we did in Section \ref{s:galois} the isomorphism \[\frac{\Z[\beta]}{\gen{p}} \cong \frac{\F_p[y]}{\langle \overline g \rangle}, \] noting that it is induced by $\beta \mapsto y$, and replacing $\beta$ with $y$ in (\ref{eq:f-lin-modp}), we may ultimately view this as a factorisation of $\bar f$ in $(\F_p[y]/\gen{g})[x]$.

 Next we compute the reduction of each $\bar h_i(y) \in \F_p[y]/\gen{\bar g}$ modulo $\langle \bar g_0 \rangle$ with at most $d^{O(1)} \log^{1+\eps}p$ bit operations; we will denote this reduction by $\tilde h_i(y)$. This gives the following expression for $\bar f(x) \in (\F_p[y]/\gen{\bar g_0})[x]$:
\[ \bar {f}(x) = (x-\tilde h_1(y))\cdots (x-\tilde h_d(y)) \in \frac{\F_p[y]}{\langle \bar g_0 \rangle}[x]\cong \F_{p^{\deg \bar g_0}}[x].\]

Finally, to recover the factorisation of $\bar f$ in $\F_p[x]$, it remains to compute the orbits of $\tilde h_i(y)$ under the $p$th power map. Indeed, $\bar f \in \F_p[x]$, so the $p$th power of any root of $\bar f(x)$ is another root of $\bar f(x)$. Furthermore, iterating this process $\deg \bar g_0$ times and computing the product of the corresponding linear factors yields  an irreducible factor of $\bar f(x)$.

Of course, computing the product of $\tilde h_i(y)$ with itself $p-1$ times is too expensive. Instead, compute $\tilde h_j(y) \coloneqq \tilde h_1(y)^p$ by repeated squaring. This requires $O(\log p)$ multiplications and reductions in $\F_p[y]/\langle \bar g_0 \rangle$, each of which may be conducted with at most $ m^{O(1)} \log^{1+\eps} p$ bit operations. Then compute $\tilde h_j(y)^p$ and iterate until returning to the original factor $\tilde h_1(y)$. 
Note that the size of the orbit is at most $d$, and the product of the corresponding linear factors may be computed with $m^{O(1)} \log^{1+\eps}p$ bit operations. 

We then repeat this process for any $\tilde h_i(y)$ not previously obtained. By iterating in this way, this process recovers the complete factorisation of $\bar f$ into irreducibles in $\F_p[x]$ with at most 
\[m^{O(1)} \log^{2+\eps}p\leq m^{O(1)} \log^{2+\eps}N\]  bit operations. Summing over $p<N$ proves Theorem \ref{t:main}. 
\end{proof}

\appendix

\section{The resultant}\label{s:res}
We recall here some basic facts about the resultant;
for more information, the reader may consult
\cite[Ch.\,6.3]{vzGG13} or \cite[\S3.3.2]{Coh93}.

Let $R$ be a unique factorisation domain.
(In our application, $R$ will be $\Z$, $\Z[y]$ or $\F_p[y]$.)
For $\ell \geq 0$, let $R[x]_\ell$ denote the $R$-module of polynomials
in $R[x]$ of degree less than or equal to $\ell$.

Let $f, g \in R[x]$ be nonzero polynomials
of degree $n, m \geq 0$ respectively,
say $f(x) = \sum_{i=0}^n f_i x^i$ and $g(x) = \sum_{i=0}^m g_i x^i$.
Consider the map
$\varphi_{f,g} \colon R[x]_m \times R[x]_n \to R[x]_{m+n}$
given by $(s, t) \mapsto sf + tg$.
The domain and codomain of $\varphi_{f,g}$
are both free $R$-modules of rank $m+n$.
With respect to the standard monomial bases,
the matrix of $\varphi_{f,g}$ is the
$(m+n) \times (m+n)$ Sylvester matrix
\[ S_{f,g} = \begin{pmatrix}
f_0    &        &        &        & g_0    &        &        &        &        &        \\
f_1    & f_0    &        &        & g_1    & g_0    &        &        &        &        \\
\vdots & f_1    & \ddots &        & \vdots & g_1    & \ddots &        &        &        \\
\vdots & \vdots &        & f_0    & \vdots & \vdots &        & \ddots &        &        \\
\vdots & \vdots &        & f_1    & g_m    & \vdots &        &        & g_0    &        \\
\vdots & \vdots &        & \vdots &        & g_m    &        &        & g_1    & g_0    \\
f_n    & \vdots &        & \vdots &        &        & \ddots &        & \vdots & g_1    \\
       & f_n    &        & \vdots &        &        &        & \ddots & \vdots & \vdots \\
       &        & \ddots & \vdots &        &        &        &        & g_m    & \vdots \\
       &        &        & f_n    &        &        &        &        &        & g_m
\end{pmatrix}. \]
The \emph{resultant} of $f$ and $g$ is defined to be
the determinant of $S_{f,g}$.
It is denoted by $\res_x(f, g) \in R$.
The point of this construction is the following:
\begin{lemma}\label{l:resultant}
The resultant $\res_x(f, g)$ is zero in $R$ if and only if
$\gcd(f, g)$ is nonconstant in $R[x]$.
\end{lemma}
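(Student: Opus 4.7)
The plan is to pass to the fraction field and interpret the condition $\res_x(f,g) = 0$ as non-injectivity of $\varphi_{f,g}$, then translate non-injectivity into an explicit divisibility statement.

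First, let $K$ denote the fraction field of $R$. Since $\res_x(f,g)$ is an element of $R$, it vanishes in $R$ if and only if it vanishes in $K$. Viewing $\varphi_{f,g}$ as a linear map between two free $K$-modules of the same rank $m+n$ via extension of scalars, its determinant is still $\res_x(f,g)$, so $\res_x(f,g) = 0$ iff $\varphi_{f,g}$ is not injective over $K$, iff there exist $(s,t) \in K[x]_m \times K[x]_n$, not both zero, with $sf + tg = 0$.

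Next I would show that the existence of such $(s,t)$ is equivalent to $\gcd(f,g)$ being nonconstant in $K[x]$. For the forward direction, suppose $sf = -tg$ with $(s,t)$ nonzero and the stated degree bounds. If either $s$ or $t$ were zero, the other would have to be too (since $f,g$ are nonzero in the integral domain $K[x]$), so in fact both are nonzero. If $\gcd(f,g) = 1$ in $K[x]$, then since $K[x]$ is a PID, $f \mid tg$ would force $f \mid t$ in $K[x]$; but $\deg t < n = \deg f$ forces $t = 0$, a contradiction. Hence $\gcd(f,g)$ is nonconstant in $K[x]$. For the converse, write $f = h f_1$, $g = h g_1$ with $h \in K[x]$ of positive degree; then $(s,t) \coloneqq (g_1, -f_1)$ is nonzero, satisfies $\deg s = m - \deg h < m$ and $\deg t = n - \deg h < n$, and $sf + tg = g_1 h f_1 - f_1 h g_1 = 0$.

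Finally, I would use Gauss's lemma to transfer from $K[x]$ to $R[x]$: if $\gcd(f,g)$ has positive degree in $R[x]$ then it certainly does in $K[x]$, and conversely any common factor $h \in K[x]$ of positive degree can be replaced (after clearing denominators and extracting content) by a primitive polynomial in $R[x]$ of the same degree which still divides both $f$ and $g$, so $\gcd(f,g)$ in $R[x]$ is nonconstant. This closes the chain of equivalences. The main technical content is the field-level step, where the degree inequality $\deg t < \deg f$ must be combined with the PID structure of $K[x]$; the rest is bookkeeping.
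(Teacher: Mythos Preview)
Your argument is correct and is the standard proof of this classical fact: pass to the fraction field, identify vanishing of the determinant with the existence of a nontrivial relation $sf+tg=0$ of bounded degree, convert that to a nonconstant common factor over $K[x]$ using the PID structure, and descend to $R[x]$ via Gauss's lemma. The paper does not actually prove the lemma---it simply cites \cite[Cor.~6.20]{vzGG13}---so there is no in-paper argument to compare against; what you have written is essentially what one finds in that reference.

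One small notational point worth tightening: under the paper's convention $R[x]_\ell$ consists of polynomials of degree $\leq \ell$, so the stated domain $R[x]_m\times R[x]_n$ has rank $m+n+2$, not $m+n$, and does not match the $(m+n)\times(m+n)$ Sylvester matrix. (This is an off-by-one slip already present in the paper's setup.) Your argument silently corrects this when you invoke the strict bound $\deg t < n = \deg f$, which is indeed the inequality encoded by the Sylvester matrix and exactly what is needed for the step ``$f\mid t$ forces $t=0$''. Just make sure the degree conventions you state at the outset agree with the strict bound you use later.
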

\begin{proof}
See \cite[Cor.\,6.20]{vzGG13}.
\end{proof}

It is almost but not quite true that taking resultants
commutes with reduction modulo $p$.
The next result will suffice for our needs.
Recall that for $h \in \Z[x]$,
we write $\bar h$ for its image in $\F_p[x]$.
\begin{lemma}\label{l:resModP}
Let $f, g \in \Z[x]$ be nonzero, and let $p$ be a prime.
Assume that $f$ is monic and that $\bar g \neq 0$. Then
\[ \res_x(\bar f, \bar g) = 0 \quad \iff \quad
   \overline{\res_x(f, g)} = 0. \]
\end{lemma}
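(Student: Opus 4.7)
The plan is to show that both sides of the claimed equivalence are equivalent to the single condition that $\bar f$ and $\bar g$ share a nontrivial common factor in $\F_p[x]$. The subtle point is that if $p$ divides the leading coefficient of $g$, then $\deg \bar g < \deg g = m$, so the Sylvester matrices $S_{\bar f, \bar g}$ and $\overline{S_{f,g}}$ have different sizes, and we cannot simply invoke a naive compatibility of resultants with reduction. The monicity of $f$, which guarantees $\deg \bar f = \deg f = n$, will be essential.

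For the right-hand side, since $\bar f$ and $\bar g$ are both nonzero in the UFD $\F_p[x]$ (using monicity of $f$ for $\bar f \neq 0$), Lemma~\ref{l:resultant} applied to $\F_p[x]$ gives directly that $\res_x(\bar f, \bar g) = 0$ iff $\gcd(\bar f, \bar g)$ is nonconstant. For the left-hand side, I would interpret the $(n+m) \times (n+m)$ matrix $\overline{S_{f,g}}$ as representing (with respect to monomial bases) the $\F_p$-linear map
\[ \varphi \colon \{s \in \F_p[x] : \deg s < m\} \oplus \{t \in \F_p[x] : \deg t < n\} \to \{u \in \F_p[x] : \deg u < n+m\}, \]
$(s,t) \mapsto s\bar f + t \bar g$, so that $\overline{\res_x(f,g)} = \det \varphi$, which vanishes iff $\ker \varphi \neq 0$. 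To show that $\ker \varphi \neq 0$ iff $\gcd(\bar f, \bar g)$ is nonconstant, one direction is routine: if $h \coloneqq \gcd(\bar f, \bar g)$ has positive degree, write $\bar f = h \tilde f$ and $\bar g = h \tilde g$; then $(\tilde g, -\tilde f)$ lies in $\ker \varphi$ and satisfies the required degree bounds. For the converse, given nonzero $(s, t) \in \ker \varphi$, monicity of $\bar f$ forces $t \neq 0$ (otherwise $s \bar f = 0$ with $\bar f \neq 0$ would force $s = 0$ in the domain $\F_p[x]$); then $\bar f \mid t \bar g$, and combined with $\deg t < n = \deg \bar f$, this contradicts $\gcd(\bar f, \bar g) = 1$.

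The main obstacle is this converse direction in the left-hand side analysis: it is precisely here that the hypothesis ``$f$ monic'' is used, to ensure $\deg \bar f = n$ so that the degree-count argument on $t$ goes through. (Without it, one could have $\bar f = 0$ or $\deg \bar f < n$ and the argument collapses.) Combining the two characterisations yields the equivalence.
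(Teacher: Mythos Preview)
Your argument is correct. The paper does not give a self-contained proof here; it simply cites \cite[Lem.\,6.25(i)]{vzGG13} and remarks that the only subtlety is the possible drop in $\deg \bar g$. Your kernel analysis of the map $\varphi$ on $\F_p[x]_{<m} \times \F_p[x]_{<n}$ is exactly the way one unpacks that citation, and you have correctly identified where monicity of $f$ enters (to ensure $\bar f \neq 0$ and $\deg \bar f = n$, so that $\bar f \mid t$ with $\deg t < n$ forces $t = 0$).

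Two minor remarks. First, your labels ``left-hand side'' and ``right-hand side'' are swapped relative to the displayed equivalence; this is cosmetic. Second, the referenced lemma in \cite{vzGG13} actually gives the stronger conclusion $\overline{\res_x(f,g)} = \res_x(\bar f,\bar g)$ outright (since $f$ monic makes the correction factor $\overline{\mathrm{lc}(f)}^{\,m-\deg \bar g}$ equal to $1$), via a block determinant computation on $\overline{S_{f,g}}$. Your approach proves only the equivalence of vanishing, which is all the lemma claims, so this is not a gap --- just a note that a direct determinant calculation buys slightly more.
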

\begin{proof}
This is a special case of \cite[Lem.\,6.25(i)]{vzGG13}.
(The result follows from a straightforward calculation with determinants;
the only slight complication is that the degree of $\bar g$ might be less than
the degree of $g$.)
\end{proof}

Also, let us record the following observation about the runtime of the resultant computation which is needed for Proposition \ref{p:global}. We use notation from there.

\begin{lemma}\label{l:resComplexity}
Computing the resultant $h_{\sigma, u}(a) \coloneqq \res_x(f(x),\tilde u(x)-a)$ as is required in Proposition \ref{p:global} may be done with $(d\log H)^{O(1)}$ bit operations. 
\end{lemma}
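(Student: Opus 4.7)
The plan is to compute $h_{\sigma,u}(y) \in \Z[y]$ by evaluation and interpolation. First recall from Proposition \ref{p:global}(c) (and the bound from Lemma \ref{l:compute-zbasis}) that we may assume each basis element $u \in \BB_\sigma$ has been computed in the form $\tilde u(\theta)$ with $\tilde u \in \Z[x]$ of degree less than $d$ and integer coefficients of bit size at most $(d\log H)^{O(1)}$. The Sylvester matrix $S_{f,\tilde u - y}$ is then of size at most $(2d-1) \times (2d-1)$ with entries in $\Z[y]$ of degree at most $1$ (only the entries coming from the constant term $\tilde u_0 - y$ involve the variable $y$), and whose integer coefficients have bit size $O(\log H) + (d\log H)^{O(1)} = (d\log H)^{O(1)}$.

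Since $h_{\sigma,u}(y) = \det S_{f,\tilde u - y}$ is a polynomial in $y$ of degree at most $d$, it is determined by its values at any $d+1$ distinct points. I will evaluate at $y = 0, 1, \ldots, d$. For each such $y = y_k$, the matrix becomes an integer matrix of dimension $O(d)$ with entries of bit size $(d\log H)^{O(1)}$, and its determinant can be computed in $(d\log H)^{O(1)}$ bit operations by any standard polynomial-time algorithm for integer determinants (e.g.\ fraction-free Gaussian elimination, or the Chinese Remainder approach via reduction modulo many small primes); by Hadamard's inequality the determinant itself has bit size $(d\log H)^{O(1)}$. Doing this for all $d+1$ evaluation points costs $(d\log H)^{O(1)}$ bit operations in total.

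Finally, recovering the coefficients of $h_{\sigma,u}(y)$ from its values at $0, 1, \ldots, d$ is a Lagrange interpolation problem with fixed small rational nodes and integer outputs; using Hadamard once more to bound all intermediate sizes, this step also costs $(d\log H)^{O(1)}$ bit operations. Summing the three contributions gives the claimed bound.

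There is no real obstacle here: the only substantive input is the bit-size bound on the coefficients of $\tilde u$, which is already guaranteed by Proposition \ref{p:global}(c); the rest is routine bookkeeping of sizes in evaluation--interpolation combined with Hadamard's inequality.
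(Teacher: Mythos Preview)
Your argument is essentially the same as the paper's: both evaluate the Sylvester determinant at $y=0,1,\ldots,d$ using a polynomial-time integer determinant routine (the paper names Bareiss, you name fraction-free elimination/CRT) and then interpolate (the paper inverts the Vandermonde matrix, you invoke Lagrange interpolation). The bookkeeping on bit sizes via the precomputed bound on $\tilde u$ and Hadamard's inequality is also the same, so the proposal is correct and matches the paper's approach.
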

\begin{proof}
We give an ad-hoc argument which is slightly silly but suffices for our purposes: we will find $h_{\sigma,u}(a)$ by finding it at many values of $a$ and then interpolating. Note that the Sylvester matrix for $\res_x(f(x),\tilde u(x)-a)$ has dimension at most $2d+1$. Evaluate this matrix at $a=0,1,2,\ldots, d$. Recall that in Proposition \ref{p:global} the elements of $\cB_\sigma$ have already been computed with $(d\log H)^{O(1)}$ bit operations, and so the number of bits required to store each lift $\tilde u$ is certainly at most $(d\log H)^{O(1)}$. Ultimately, the entries of the relevant Sylvester matrices have bit size bounded by $(d\log H)^{O(1)}$. The determinants of these Sylvester matrices may therefore be deterministically computed for $a=0,1,\ldots, d$ with $(d\log H)^{O(1)}$ bit operations (deterministic polynomial-time determinant computations may be done, for example, with the Bareiss algorithm \cite{Bar68}). Then one may recover the coefficients of the polynomial $\res_x(f(x), \tilde u(x)-a)=h_{\sigma,u}(a)$ by computing the inverse of the relevant Vandermonde matrix (again, the Bareiss algorithm may be used here), and computing its product with the vector of polynomial values. 
\end{proof}

\section{Some algebraic number theory}\label{a:alg-nt}
The main purpose of this subsection is to prove Proposition \ref{p:algNT}, which follows from standard results in algebraic number theory. We sketch the necessary background and proof here. The reader may consult, for example, \cite{ST02} or \cite{Neu99} for further relevant background.

We will also need a bound on the size of the discriminant $\delta_f$ which we will note in Lemma \ref{l:disc-bound}. Recall that for $f\in \Z[x]$ monic of degree $d$, the \textit{discriminant} $\delta_f$ of $f$ is defined by $\delta_f \coloneqq (-1)^{d(d-1)/2}\res_x(f(x),f'(x))$. The following lemma is a (crude) consequence of Hadamard's inequality on matrix determinants, which says that the absolute value of a matrix determinant is at most the product of the $\ell^2$ norms of its columns. 

\begin{lemma}\label{l:disc-bound}
Let $f\in \Z[x]$ be monic with degree $d \geq 2$ and coefficients of size at most $H$. Then $\log |\delta_f| \leq (d\log H)^{O(1)}$.
\end{lemma}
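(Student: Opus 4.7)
The plan is to write $\delta_f$ as the determinant of the Sylvester matrix of $f$ and $f'$, bound the Euclidean norms of its columns trivially, and apply Hadamard's inequality.

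By definition, $\delta_f = (-1)^{d(d-1)/2} \res_x(f,f') = \pm \det S_{f,f'}$, where $S_{f,f'}$ is the Sylvester matrix associated to the pair $(f,f')$. Since $\deg f = d$ and $\deg f' = d-1$, this matrix has dimensions $(2d-1) \times (2d-1)$. Its columns are of two types: the first $d-1$ columns are shifted copies of the coefficient vector of $f$, and the remaining $d$ columns are shifted copies of the coefficient vector of $f'$. Each column contains at most $d+1$ nonzero entries.

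The coefficients of $f$ have absolute value at most $H$. The coefficients of $f'$ are of the form $i f_i$ with $1 \leq i \leq d$ and $|f_i| \leq H$, so have absolute value at most $dH$. Consequently, the Euclidean norm of each column is bounded by
\[ \sqrt{d+1} \cdot dH \leq \sqrt{2d} \cdot dH. \]
By Hadamard's inequality,
\[ |\delta_f| = |\det S_{f,f'}| \leq \prod_{j=1}^{2d-1} \|c_j\|_2 \leq \left(\sqrt{2d}\cdot dH\right)^{2d-1}. \]
Taking logarithms yields
\[ \log|\delta_f| \leq (2d-1)\bigl(\tfrac{1}{2}\log(2d) + \log d + \log H\bigr) = O(d\log d + d\log H), \]
which is certainly bounded by $(d\log H)^{O(1)}$.
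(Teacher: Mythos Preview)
Your proof is correct and follows exactly the approach indicated in the paper: the lemma is stated there as a crude consequence of Hadamard's inequality applied to the Sylvester matrix of $f$ and $f'$, and you have filled in precisely those details. The bound $O(d\log d + d\log H)$ you obtain is indeed $(d\log H)^{O(1)}$ under the paper's standing convention that such expressions mean $(\max(2,d\log H))^{O(1)}$.
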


Recall the equivalent formulation of the discriminant in terms of the roots of $f$: $\delta_f = \prod_{i<j}(\theta_i - \theta_j)^2$, where  the $\theta_i$ are the (distinct) roots of $f$. This is the incarnation that we will use in the upcoming lemmas. Recall also that the \textit{conductor} of $\Z[\theta]$ in $\OO_K$ is the ideal $\{a \in \OO_K : a\OO_K \subseteq \Z[\theta]\}$. Note that it is an ideal both of $\Z[\theta]$ and $\OO_K$. The following is needed in preparation for proving Proposition \ref{p:algNT}, which is ultimately what is needed for our algorithm in the main text.

\begin{lemma}\label{l:disc}
Let $f\in \Z[x]$ be monic and irreducible. Suppose that $p\in \Z$ is a prime which does not divide the discriminant $\delta_f$ of $f$. Then:
\begin{enumerate}
\item $p\OO_K$ is coprime to the conductor of $\Z[\theta]$ in $\OO_K$,
and
\item $p$ does not ramify in $\OO_K$.
\end{enumerate} 
\end{lemma}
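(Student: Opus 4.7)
My plan is to deduce both statements from the fundamental discriminant-conductor-index relation
\[ \delta_f = [\OO_K : \Z[\theta]]^2 \cdot \delta_K, \]
where $\delta_K$ denotes the discriminant of the number field $K$. Assuming $p \nmid \delta_f$, this relation immediately yields both $p \nmid \delta_K$ and $p \nmid [\OO_K : \Z[\theta]]$, which I will then exploit to obtain the two claims separately.

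For part (1), I will use the elementary observation that the conductor $\mathfrak{c}$ of $\Z[\theta]$ in $\OO_K$ contains the integer $n \coloneqq [\OO_K : \Z[\theta]]$. Indeed, the quotient $\OO_K/\Z[\theta]$ is a finite abelian group of order $n$, so multiplication by $n$ annihilates it, meaning $n \OO_K \subseteq \Z[\theta]$; this is precisely the statement that $n \in \mathfrak{c}$ (viewing $\mathfrak{c}$ as the $\OO_K$-ideal $\{a \in \OO_K : a\OO_K \subseteq \Z[\theta]\}$). Since $p \nmid n$ by the discussion above, there exist integers $a, b$ with $an + bp = 1$, so $1 \in \mathfrak{c} + p\OO_K$, proving that $p\OO_K$ and $\mathfrak{c}$ are coprime.

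For part (2), I will invoke the standard criterion from algebraic number theory that a prime $p$ ramifies in $\OO_K$ if and only if $p \mid \delta_K$ (see e.g.\ \cite{ST02} or \cite{Neu99}). Since $p \nmid \delta_K$ by the relation above, $p$ is unramified in $\OO_K$.

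The only mildly subtle point is the discriminant-index relation itself; this is classical and can be derived by comparing the change-of-basis matrix between a $\Z$-basis for $\Z[\theta]$ and a $\Z$-basis for $\OO_K$, whose determinant is (up to sign) $[\OO_K : \Z[\theta]]$. Once this relation is in hand, the proof is essentially a bookkeeping exercise; I do not anticipate any real obstacle.
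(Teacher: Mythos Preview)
Your proposal is correct and follows essentially the same approach as the paper: both use the relation $\delta_f = [\OO_K:\Z[\theta]]^2 \Delta_K$, deduce part (1) from $p\nmid [\OO_K:\Z[\theta]]$ via the observation that this index lies in the conductor together with a B\'ezout identity, and deduce part (2) from $p\nmid \Delta_K$ via the standard ramification criterion.
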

\begin{proof}
This is standard. Recall that we may write $\delta_f = [\OO_K:\Z[\theta]]^2\Delta_K$, where $\Delta_K$ is the discriminant of $K$ (this follows by writing the monomial basis $\{\theta^j\}_{j=0}^{d-1}$ in terms of an integral basis for $\OO_K$ and recalling that $[\OO_K:\Z[\theta]]$ is the magnitude of the determinant of this change of basis matrix).  The first point is then a consequence of the fact that $p\nmid [\OO_K:\Z[\theta]]$. Indeed, note first that $[\OO_K:\Z[\theta]]$ lies in the conductor because $[\OO_K:\Z[\theta]]$ is the cardinality of the quotient $\OO_K/\Z[\theta]$. Then by coprimality in $\Z$ we have that there are integers $a,b$ such that $ap + b[\OO_K:\Z[\theta]]=1$, yielding coprimality of $p\OO_K$ and the conductor. The second point is a consequence of the fact that $p\nmid \Delta_K$ (see, for example \cite[Ch.III, Theorem 2.9]{Neu99}, \cite[Ch.III, Corollary 2.11]{Neu99}).  
\end{proof}

\begin{prop}\label{p:algNT}
Let $f\in \Z[x]$ be monic and irreducible such that $\Q[x]/\gen{f}$ is a Galois field extension of $\Q$ with Galois group $G$. Suppose that $p \in \Z$ is a prime which does not divide the discriminant $\delta_f$ of $f$. Then: 
\begin{enumerate}
\item $\bar f$ factorises into distinct irreducibles $f_i$, each of the same degree,
\item $G$ acts transitively on the set of prime ideals $\gen{f_i}$ in $\F_p[x]/\gen{\bar f}$,
\item for each irreducible factor $f_i$, there exists $\sigma \in G$ which fixes $\gen{f_i}$  and which acts as the Frobenius automorphism (i.e., the $p$th power map) on the extension of fields $(\F_p[x]/\gen{f_i})/\F_p$.
\end{enumerate} 
\end{prop}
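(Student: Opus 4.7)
The plan is to deduce all three claims from standard facts about Galois extensions of number fields, together with the conductor coprimality result in Lemma~\ref{l:disc}. The starting observation is that, since $p$ does not divide the conductor of $\Z[\theta]$ in $\OO_K$, the natural inclusion $\Z[\theta] \hookrightarrow \OO_K$ induces an isomorphism
\[
\frac{\Z[\theta]}{p\Z[\theta]} \xrightarrow{\sim} \frac{\OO_K}{p\OO_K},
\]
and the left-hand side is naturally isomorphic to $\F_p[x]/\langle \bar f\rangle$. Under this isomorphism, the maximal ideals on one side correspond to those on the other, so the prime-ideal decomposition $p\OO_K = \pp_1\cdots \pp_g$ transports to the decomposition of $\langle \bar f\rangle$ into maximal ideals, which in turn corresponds to the irreducible factorisation $\bar f = f_1\cdots f_r$ (with $r = g$, each $\pp_i$ corresponding to some $\langle f_i\rangle$).

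For (1), I would use that $p$ is unramified (Lemma~\ref{l:disc}(2)). Unramifiedness means that $p\OO_K$ is squarefree as an ideal, i.e.\ the $\pp_i$ are distinct, so under the above correspondence the $f_i$ are distinct. Moreover $K/\Q$ is Galois, so $G$ acts transitively on the set $\{\pp_1,\dots,\pp_g\}$ of primes of $\OO_K$ lying over $p$ — this is the key classical fact. In particular all residue-field degrees $f(\pp_i/p) = [\OO_K/\pp_i : \F_p]$ coincide, which (again via the correspondence) is exactly $\deg f_i$. Hence the $f_i$ all have the same degree.

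For (2), I would transport the transitive $G$-action on $\{\pp_1,\dots,\pp_g\}$ across the isomorphism above. The only subtlety is that $\sigma \in G$ is an automorphism of $\OO_K$ but not necessarily of $\Z[\theta]$, so it acts on $\OO_K/p\OO_K$ but only induces an action on $\F_p[x]/\langle \bar f\rangle$ after identifying the two rings via the conductor-coprimality isomorphism. Once this identification is made, transitivity on the $\pp_i$ becomes transitivity on the $\langle f_i \rangle$. For (3), I would appeal to the standard decomposition-group/inertia-group picture: for each $\pp_i$, the decomposition group $D_i \coloneqq \{\sigma \in G : \sigma(\pp_i)=\pp_i\}$ maps naturally to $\Gal((\OO_K/\pp_i)/\F_p)$, with kernel the inertia group $I_i$; in general the map is surjective, and since $p$ is unramified (Lemma~\ref{l:disc}(2)) we have $I_i = 1$, making the map an isomorphism. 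Pulling back the Frobenius of the residue extension gives the desired $\sigma \in G$ that fixes $\pp_i$ (hence $\langle f_i\rangle$) and acts as $x \mapsto x^p$ on $\OO_K/\pp_i \cong \F_p[x]/\langle f_i\rangle$.

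The main obstacle — really the only nontrivial point — is bookkeeping around the conductor, since we are working with $\Z[\theta]$ rather than $\OO_K$: one must check that factorisation, the $G$-action, and the residue-field identification all descend cleanly from $\OO_K/p\OO_K$ to $\Z[\theta]/p\Z[\theta]$. All of this is handled by Lemma~\ref{l:disc}(1), after which (1)--(3) reduce to classical facts about splitting of unramified primes in Galois number fields.
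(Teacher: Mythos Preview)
Your proposal is correct and follows essentially the same approach as the paper: establish the isomorphism $\F_p[x]/\langle \bar f\rangle \cong \Z[\theta]/\gen{p} \cong \OO_K/\gen{p}$ via the conductor coprimality of Lemma~\ref{l:disc}(1), then read off (1)--(3) from the standard theory of splitting of unramified primes in Galois number fields. You are, if anything, slightly more explicit than the paper (which simply cites \cite[Ch.~I, \S9]{Neu99}), in that you spell out the decomposition/inertia-group argument for (3) and note the subtlety that $\sigma$ need not preserve $\Z[\theta]$.
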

\begin{proof}
We claim the following isomorphisms, which are induced by $\theta \mapsto x$:
\[\frac{\OO_K}{\gen{p}} \cong \frac{\Z[\theta]}{\gen{p}} \cong \frac{\F_p[x]}{\gen{\bar f}}.\]
The right hand isomorphism is immediate by noting that both are isomorphic to $\Z[x]/\gen{f,p}$. The left hand isomorphism is induced by the inclusion $\Z[\theta] \hookrightarrow \OO_K$; we just need to observe surjectivity. But Lemma \ref{l:disc} says that $p\OO_K$ is coprime to the conductor of $\Z[\theta]$ in $\OO_K$, so 
\[1 \in p\OO_K + \{a \in \OO_K : a\OO_K \subseteq \Z[\theta]\}.\]
Noting that the conductor is an ideal both of $\OO_K$ and $\Z[\theta]$, we may multiply this equation by any element (say, $b$) of $\OO_K$ to obtain that $b \in p\OO_K + \Z[\theta]$. This proves surjectivity. 

The conclusions of the proposition thus follow from the corresponding claims about prime ideals in $\OO_K/\gen{p}$. The ``distinctness'' claim of (1) follows immediately from Lemma \ref{l:disc} (2). The ``same degree'' claim of (1), and claims (2) and (3) follow from standard facts about how $\Gal(K/\Q)$ acts on $\OO_K/\gen{p}$ (see \cite[Chapter~I,~Section~9]{Neu99} for more details).
\end{proof}

\section{Kernels and bases mod $p$}
Throughout this section, let $p$ be a prime which does not divide $\delta_fd$, where $f$ is irreducible and Galois with Galois group $G\coloneqq \Gal(K/\Q)$. The point of this appendix is twofold: firstly to show in Lemma \ref{l:bases-mod-p} that taking a basis commutes with reduction modulo $p$ under suitable conditions, and secondly to prove Lemma \ref{l:compute-zbasis} which deals with the computation of a $\Z$-basis for $K^\sigma \cap \Z[\theta]$.  In reading this section, the reader may wish to bear in mind the following sequence of isomorphisms addressed in the previous appendix:
\[ \OO_K/\gen{p} \cong \Z[\theta]/\gen{p} \cong \F_p[x]/\gen{\bar f}.\]
Since each of these isomorphisms are suitably trivial (i.e., the first is induced by the inclusion $\Z[\theta] \hookrightarrow \OO_K$, and the second by $\theta \mapsto x$), we often suppress them in our notation, and pass between these algebras without comment. In particular, for $\sigma \in \Gal(K/\Q)$, we may view $\bar \sigma$ as a map on any one of these algebras.

\begin{lemma}\label{l:bases-mod-p}
Let $p$ be a prime with $p \nmid \delta_f d$ and let $\BB_\sigma$ be a $\Z$-basis for $B_\sigma \coloneqq K^\sigma \cap \Z[\theta]$. Then  $\bar \BB_\sigma \coloneqq \BB_\sigma \mod p$ spans $\ker(\bar \sigma-\id)\subset \Z[\theta]/\gen{p}$ over $\F_p$. 
\end{lemma}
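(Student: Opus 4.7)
My plan is a dimension count. I would show that the reduction map $B_\sigma \to \Z[\theta]/\langle p\rangle$ factors through an $\F_p$-linear injection $B_\sigma/pB_\sigma \hookrightarrow \ker(\bar\sigma - \id)$, and then argue that both sides have $\F_p$-dimension $d/\ord\sigma$, forcing the injection to be a bijection (so $\bar\BB_\sigma$ would in fact be an $\F_p$-basis of $\ker(\bar\sigma - \id)$, which is more than enough).

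The easy half is the injection. The image lies in $\ker(\bar\sigma - \id)$ since $\sigma$-fixed elements obviously remain $\bar\sigma$-fixed after reduction. For injectivity of $B_\sigma/pB_\sigma \to \Z[\theta]/\langle p\rangle$: if $b \in B_\sigma$ with $b = pc$ for some $c \in \Z[\theta]$, then applying $\sigma$ and cancelling $p$ (which is fine in $K$) yields $\sigma(c) = c$, so $c \in K^\sigma \cap \Z[\theta] = B_\sigma$, whence $b \in pB_\sigma$. Since $B_\sigma$ is free of rank $[K^\sigma : \Q] = d/\ord\sigma$, the source has $\F_p$-dimension $d/\ord\sigma$.

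To compute the dimension of the target, I would transport the problem to $\OO_K/p\OO_K$ using the $\sigma$-equivariant isomorphism $\Z[\theta]/\langle p\rangle \cong \OO_K/\langle p\rangle$ recorded in Appendix \ref{a:alg-nt} (which uses $p \nmid \delta_f$, so that $p$ is coprime to the conductor). It then suffices to show the natural map
\[ \OO_{K^\sigma}/p\OO_{K^\sigma} \;\longrightarrow\; (\OO_K/p\OO_K)^{\bar\sigma} \]
is an isomorphism, since the left-hand side has $\F_p$-dimension $[K^\sigma:\Q] = d/\ord\sigma$. Injectivity is immediate by the same cancellation argument as above, with $\OO_K$ in place of $\Z[\theta]$.

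The main obstacle, and the step where the hypothesis $p \nmid d$ enters in an essential way, is surjectivity. I would prove it by a Reynolds-operator/averaging argument: given $x \in \OO_K$ with $\sigma(x) \equiv x \pmod p$, set $m \coloneqq \ord\sigma$ and pick $m^* \in \Z$ with $mm^* \equiv 1 \pmod p$ (available because $m \mid d$ and $p \nmid d$, so $m$ is a unit mod $p$). Define
\[ y \coloneqq m^* \sum_{h \in \langle \sigma \rangle} h(x) \in \OO_K. \]
Then $y$ is manifestly fixed by $\sigma$, hence lies in $\OO_K \cap K^\sigma = \OO_{K^\sigma}$; and modulo $p$ each $h(x) = \sigma^k(x) \equiv x$, so $y \equiv m^* m x \equiv x \pmod p$. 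Thus $y$ lifts $\bar x$, completing the proof.
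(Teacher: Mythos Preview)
Your proof is correct and follows essentially the same route as the paper's: both transport the question to $\OO_K/p\OO_K$ via the isomorphism of Appendix~\ref{a:alg-nt} and then use an averaging (Reynolds) operator to produce, for any $\bar\sigma$-fixed class, a genuine $\sigma$-fixed lift in $\OO_{K^\sigma}$. The packaging differs slightly---the paper first relates $\BB_\sigma$ to a $\Z$-basis $\CC_\sigma$ of $\OO_{K^\sigma}$ by an index argument and then shows $\bar\CC_\sigma$ spans, whereas you run a dimension count that additionally shows $\bar\BB_\sigma$ is a \emph{basis}; and the paper averages over all of $G$ (inverting $d$ mod $p$) while you average only over $\langle\sigma\rangle$ (inverting $\ord\sigma$, which divides $d$)---but the core argument is the same.
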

\begin{proof}
Write the elements of $\BB_\sigma$ as integer-linear combinations of the elements of a $\Z$-basis $\CC_\sigma$ for $\OO_{K^\sigma}$, where the determinant of the change of basis matrix is of course equal to $[\OO_{K^\sigma}:K^\sigma\cap \Z[\theta]]$. Note next that there is an injection 
\[ \frac{\OO_{K^\sigma}}{\Z[\theta]\cap K^{\sigma}} \hookrightarrow \frac{\OO_K}{\Z[\theta]}\]
which is induced by the inclusion $\OO_{K^\sigma} \hookrightarrow \OO_K$,
so $[\OO_{K^\sigma}:K^\sigma\cap \Z[\theta]]$ divides $[\OO_K:\Z[\theta]]$. In turn, as we saw in the previous appendix, $[\OO_K:\Z[\theta]]$ divides $\delta_f$, so the condition that $p\nmid \delta_f$ gives ultimately that the change of basis matrix from $\CC_\sigma$ to $\BB_\sigma$ is invertible modulo $p$. Clearly both $\bar \BB_\sigma$ and $\bar \CC_\sigma$ are contained in $\ker (\bar \sigma - \id)$, and thus $\bar \BB_\sigma$ spans $\ker(\bar \sigma - \id)$ if and only if $\bar \CC_\sigma$ does. It therefore suffices to show that if  $\CC_\sigma$ is a $\Z$-basis for $\OO_{K^\sigma}$, then its reduction modulo $p$ spans $\ker(\bar \sigma - \id)$ over $\F_p$, which we will do now. 

Let $\bar u \in \ker(\bar \sigma - \id) \subset \OO_K/\gen{p}$, so $\bar \sigma(\bar u) = \bar u$ and we may lift $\bar u$ to $u \in \OO_K$ with $\sigma(u) - u \in p\OO_K$. Next let $\ell\in \Z$ satisfy $\ell = d^{-1} \pmod p$ (recall that $d \coloneqq \deg f = |G|$ and so $\sigma^d=1$), and let 
\[v \coloneqq \ell\left(u + \sigma(u) + \cdots + \sigma^{d-1}(u)\right) \in \OO_K\cap K^\sigma = \OO_{K^\sigma},\]
so $v$ lies in the $\Z$-span of $\CC_\sigma$. Furthermore, we see that $\bar v = d^{-1}(d\bar u) = \bar u$. Therefore $\bar \CC_{\sigma}$ spans $\ker(\bar \sigma - \id)$, completing the proof. 
\end{proof}

Now we deal with computational aspects. 

\begin{lemma}\label{l:compute-zbasis}
Fix $\sigma \in \Gal(K/\Q)$. Given the (integer) matrix $M_\sigma$ for the action of $\delta_f \sigma$ on $K$ with respect to the basis $\{\theta^j\}_{j=0}^{d-1}$, we may deterministically compute a $\Z$-basis for $K^\sigma\cap \Z[\theta]$ with $(d\log H)^{O(1)}$ bit operations. 
\end{lemma}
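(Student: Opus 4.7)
The plan is to reformulate the problem as computing the integer kernel of an explicit integer matrix, and then invoke a standard polynomial-time algorithm for integer kernel computation.

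Concretely, I would first observe that for $u \in \Z[\theta]$ with coordinate vector $\vec u \in \Z^d$ in the monomial basis $\{\theta^j\}_{j=0}^{d-1}$, the condition $\sigma(u) = u$ is equivalent to $\delta_f \sigma(u) = \delta_f u$, which in coordinates reads $M_\sigma \vec u = \delta_f \vec u$. Setting
\[ A \coloneqq M_\sigma - \delta_f I_d \in M_d(\Z), \]
we therefore have an identification (via the monomial basis) between $B_\sigma = K^\sigma \cap \Z[\theta]$ and the integer kernel $\ker_\Z(A) \subseteq \Z^d$. Note that the use of $\delta_f \sigma$ rather than $\sigma$ is exactly what ensures $A$ has integer entries at all, since $\sigma$ itself need not preserve $\Z[\theta]$.

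Next, I would bound the bit size of the entries of $A$. By Proposition \ref{p:global}, $M_\sigma$ has been computed with $(d \log H)^{O(1)}$ bit operations, so its entries have bit size $(d \log H)^{O(1)}$. By Lemma \ref{l:disc-bound}, $\log |\delta_f| \leq (d \log H)^{O(1)}$. Hence $A$ is a $d \times d$ integer matrix with entries of bit size $(d \log H)^{O(1)}$.

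Finally, I would compute a $\Z$-basis for $\ker_\Z(A)$ using any of the classical deterministic polynomial-time algorithms for this task (e.g.\ via Hermite normal form \`a la Kannan--Bachem, or more recent refinements such as those of Storjohann), applied for instance to the transpose of $A$. These algorithms avoid intermediate expression swell and produce a $\Z$-basis of the kernel with entries of bit size polynomial in the bit size of $A$, within a running time that is polynomial in $d$ and the input bit size, i.e.\ $(d \log H)^{O(1)}$. Re-interpreting the resulting vectors in $\Z[\theta]$ via the monomial basis then gives the desired $\Z$-basis for $B_\sigma$. The only non-trivial ingredient is the polynomial-time computability of integer matrix kernels with controlled coefficient growth, which is well established in the literature; everything else is a routine setup.
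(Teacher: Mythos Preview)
Your proposal is correct and follows essentially the same route as the paper: identify $B_\sigma$ with the integer kernel of $M_\sigma - \delta_f I_d$ and compute that kernel via a deterministic polynomial-time Hermite normal form algorithm. One small caveat: citing Proposition~\ref{p:global} for the entry bounds on $M_\sigma$ is circular, since that proposition's proof of part~(c) invokes the present lemma; the correct reference for the bit size of the entries of $M_\sigma$ is Corollary~\ref{c:galois-galois-f} (together with Lemma~\ref{l:disc-bound} for $\delta_f$, as you already note).
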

\begin{proof}
Recall that $\sigma$ acts on $\OO_K$, and as we saw in the proof of Lemma \ref{l:disc}, $[\OO_K:\Z[\theta]]$ divides $\delta_f$, so we do indeed have that $M_\sigma\in M_{d}(\Z)$. Thus, to compute a $\Z$-basis for $K^\sigma \cap \Z[\theta]$, we may compute a $\Z$-basis for $\ker(M_\sigma - I)$. To compute a $\Z$-basis for the kernel of an integer matrix, it suffices to compute its \textit{Hermite normal form} (see, for example, \cite{Coh93}); indeed, after obtaining $H=(M_\sigma-I)Q$, where $H$ is in (columnwise) Hermite normal form and $Q\in\text{GL}_d(\Z)$, the basis for $K^\sigma \cap \Z[\theta]$ may be read off the columns of $Q$ corresponding to the zero columns of $H$.  For an $n\times n$ matrix with entries of size at most $B$, the Hermite normal form computation may be done deterministically with $(n\log B)^{O(1)}$ bit operations using lattice basis reduction (see \cite{vdK00}, \cite{HMM98}, \cite{LLL82}). Furthermore, from Corollary \ref{c:galois-galois-f}, the bit sizes of the integer entries for $M_\sigma$ are at most $(d\log H)^{O(1)}$. Combining these bounds proves the lemma.
\end{proof}

\section{Computing factorisations, Galois groups, and related objects}\label{s:fact-in-z}

Polynomial-time factorisation of (primitive) polynomials over $\Z$ goes back to Lenstra, Lenstra and Lov\' asz. 

\begin{theorem}[Factoring integer polynomials]\cite[Theorem 3.6]{LLL82}\label{t:factor-over-z}
Let $f \in \Z[x]$ be primitive, have degree $d$, and have each of its coefficients of size at most $H$. Then $f$ may be deterministically factorised into irreducibles in $\Z[x]$ with $(d\log H)^{O(1)}$ bit operations. 
\end{theorem}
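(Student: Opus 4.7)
The plan is to follow the Lenstra--Lenstra--Lov\'asz method, which combines three ingredients: modular factorisation, Hensel lifting, and lattice basis reduction.

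First I would reduce to the squarefree case by replacing $f$ with $f/\gcd(f,f')$; the original multiplicities can be recovered afterwards by a handful of further gcd computations. This preprocessing costs $(d\log H)^{O(1)}$ bit operations. Next I would locate a prime $p$ of size $(d\log H)^{O(1)}$ such that $\bar f \in \F_p[x]$ is squarefree. By Lemma~\ref{l:disc-bound} (and an analogous trivial bound on the leading coefficient) only $(d\log H)^{O(1)}$ primes can divide $\delta_f \cdot \mathrm{lc}(f)$, so a suitable $p$ may be found by linear search. Applying Berlekamp's deterministic algorithm to $\bar f$ then yields its factorisation in $\F_p[x]$ in $(dp)^{O(1)} = (d\log H)^{O(1)}$ bit operations.

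The core of the argument is passing from a factorisation modulo $p$ to a factorisation over $\Z$. By Mignotte's bound, every divisor $g \in \Z[x]$ of $f$ has coefficients of size at most $2^d H \sqrt{d+1}$, and hence bit size $O(d+\log H)$. Choosing $k$ so that $p^k$ comfortably exceeds $2^{O(d(d+\log H))}$ and applying Newton/Hensel lifting (in $(d\log H)^{O(1)}$ bit operations), I obtain a factorisation $\bar f = \bar h_1 \cdots \bar h_r$ in $(\Z/p^k\Z)[x]$. For each $i$, the set of $g \in \Z[x]$ of degree $\leq d$ with $\bar h_i \mid g \pmod{p^k}$ forms a full-rank lattice $\Lambda_i \subseteq \Z^{d+1}$ with an explicitly computable basis.

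Any genuine $\Z[x]$-factor of $f$ whose reduction modulo $p$ is divisible by the lift of $\bar h_i$ lies in $\Lambda_i$ and, by Mignotte's bound, has Euclidean norm at most $2^{O(d+\log H)}$. Conversely, by the choice of $k$, every sufficiently short vector of $\Lambda_i$ must be a scalar multiple of that factor. I would therefore run the LLL basis-reduction algorithm on $\Lambda_i$; in $(d\log H)^{O(1)}$ bit operations it returns a vector within a $2^{O(d)}$ factor of the shortest, and this vector is precisely the irreducible $\Z[x]$-factor of $f$ meeting $\bar h_i$. Iterating over the $\bar h_i$ (removing the recovered factor from $f$ and recursing) produces the full factorisation without having to enumerate subsets of $\{\bar h_1,\ldots,\bar h_r\}$. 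The main technical obstacle is the simultaneous calibration of the lifting exponent $k$, the LLL approximation factor, and the Mignotte bound so that the short vector LLL returns is provably equal to a true factor of $f$; this calibration is the heart of \cite{LLL82}.
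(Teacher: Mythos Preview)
The paper does not actually supply a proof of this theorem; it is stated in the appendix purely as a citation of \cite[Theorem~3.6]{LLL82} and used as a black box. Your sketch is a faithful outline of the original Lenstra--Lenstra--Lov\'asz argument (squarefree reduction, choice of a small prime $p$ avoiding $\delta_f$, Berlekamp factorisation in $\F_p[x]$, Hensel lifting past the Mignotte bound, and LLL-reduction on the lattice of lifts of each $p$-adic factor), so it is entirely appropriate as a proof here and in fact goes beyond what the paper provides.
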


The above method was extended by A.K. Lenstra \cite{Len83} to obtain the polynomial-time factorisation of polynomials over rings of integers of number fields. Shortly after, S. Landau obtained an analogous theorem by different methods (still using the result of \cite{LLL82} as a black box). After a little simplification from the statement in \cite{Lan85}, one obtains the following.

\begin{theorem}[Factoring polynomials over number fields]\cite[Theorem 2.1]{Lan85} \label{thm:numberfield-fact}
Let $m,n,A,B\geq 2$ be integers. Let $g\in \Z[y]$ be monic, irreducible of degree $m$ with coefficients of absolute value at most $A$. Let $L = \Q[y]/\langle g \rangle$ and let $f \in \OO_L[x]$ be monic with degree $n$ and coefficients of size\footnote{Here we view $f$ as a bivariate polynomial in $x,y$ with coefficients in $\Q$.} at most $B$. Then we may deterministically factorise $f$ in $\OO_L[x]$ with at most $(mn\log (AB))^{O(1)}$ bit operations.
\end{theorem}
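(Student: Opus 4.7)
The plan is to reduce factorisation in $\OO_L[x]$ to factorisation in $\Z[x]$ via the classical Kronecker--Trager norm construction, and then invoke Theorem \ref{t:factor-over-z}. Write $L = \Q(\alpha)$ where $\alpha$ is the image of $y$ in $\Q[y]/\gen{g}$, and regard $f$ as a bivariate polynomial $f(x,y) \in \Z[x,y]$ of degree less than $m$ in $y$ with $f(x,\alpha) = f(x)$. I will produce factors of $f$ by computing and factoring an auxiliary univariate polynomial over $\Z$.

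For a small integer parameter $s \in \Z$ to be chosen, form the norm
\[ N_s(x) \coloneqq \res_y\bigl(g(y),\, f(x - sy, y)\bigr) \in \Z[x]. \]
Writing $\alpha = \alpha_1, \ldots, \alpha_m$ for the roots of $g$ in $\overline\Q$, one has $N_s(x) = \prod_{i=1}^{m} f(x - s\alpha_i, \alpha_i)$ up to sign. The first key step is to show that one may choose an integer $s$ with $|s| \leq (mn)^{O(1)}$ such that $N_s$ is squarefree in $\Z[x]$ and such that its irreducible factorisation over $\Q$ is in natural bijection with the irreducible factorisation of $f$ over $L$. The ``bad'' values of $s$ are those causing a collision of the form $\beta - s\alpha_i = \gamma - s\alpha_j$ between roots $\beta, \gamma$ coming from distinct Galois conjugates of factors of $f$; a crude count bounds the number of bad $s$ by $(mn)^{O(1)}$. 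One therefore simply tries $s = 0, 1, 2, \ldots$ in turn, testing squarefreeness of each $N_s$ by computing $\gcd(N_s, N_s')$, and terminating at the first success.

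Once a suitable $s$ has been found, $N_s$ may be computed as a Sylvester determinant using e.g.\ the Bareiss algorithm (as in Lemma \ref{l:resComplexity}) in $(mn\log(AB))^{O(1)}$ bit operations; standard Hadamard--Mignotte bounds guarantee that the coefficients of $N_s$ have bit size $(mn\log(AB))^{O(1)}$ as well. One then invokes Theorem \ref{t:factor-over-z} to factor $N_s = q_1 \cdots q_r$ into $\Q$-irreducibles within the claimed complexity budget. For each $q_j$, the corresponding irreducible factor of $f$ over $L$ is recovered as the gcd in $L[x]$ of $f(x)$ with $q_j(x + s\alpha)$; this gcd is a computation on polynomials with coefficients of bit size $(mn\log(AB))^{O(1)}$ and so also fits within the budget. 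Since $f$ is monic and $\OO_L$ is a Dedekind domain, the monic factors thereby produced in $L[x]$ automatically lie in $\OO_L[x]$ by Gauss's lemma.

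The main obstacle is the verification in the second paragraph that a small integer $s$ suffices and can be effectively chosen. The substantive content is (a) bounding the number of bad $s$, which amounts to a root-collision counting argument via the behaviour of the resultant, and (b) proving that the claimed bijection between $\Q$-irreducible factors of $N_s$ and $L$-irreducible factors of $f$ is witnessed by the gcd formula $\gcd_{L[x]}(f(x), q_j(x + s\alpha))$. Both facts go back to Trager; the polynomial-time analysis---i.e.\ establishing explicit $(mn)^{O(1)}$ bounds on $|s|$ and on the bit sizes of all intermediate objects---is what is executed in \cite{Lan85}, and, as with Theorem \ref{t:factor-over-z}, we are content to invoke it as a black box here.
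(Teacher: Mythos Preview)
The paper does not give its own proof of this theorem: it is stated with a citation to \cite[Theorem~2.1]{Lan85} and used as a black box thereafter. So there is no proof in the paper to compare your proposal against.

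That said, your sketch is a faithful outline of the Trager norm method and is essentially the argument Landau carries out in the cited reference (reducing to Theorem~\ref{t:factor-over-z} via a resultant $N_s$, choosing a small shift $s$ to avoid collisions, and recovering $L$-irreducible factors by gcds). Your final paragraph, where you defer the explicit bit-size bounds to \cite{Lan85}, is exactly the stance the paper takes from the outset. In other words, your proposal is correct but supererogatory: the paper treats this result as input from the literature, and you have sketched the proof of that input rather than anything the paper itself contributes.
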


This theorem has a number of useful corollaries. Firstly, one may iterate Theorem \ref{thm:numberfield-fact} to obtain the following. The reader may consult \cite[Corollary 6]{Lan85} for details.

\begin{cor}[Computing the splitting field]\label{c:general-galois-f}
Let $H\geq 2$. Let $f\in \Z[x]$ be monic and irreducible with coefficients of size at most $H$. Let $L$ be its splitting field over $\Q$ and let $m\coloneqq [L:\Q]$. Then we may compute (a) a minimal polynomial $g$ for a primitive element $\beta$ for $L$ over $\Q$, and (b) a factorisation of $f$ into linear factors in terms of $\beta$, with at most $(m\log H)^{O(1)}$ bit operations.  In particular, the bit size of the coefficients of  $g$ are $(m\log H)^{O(1)}$, as are the coefficients (with respect to powers of $\beta$) of the linear factors of $f$.
\end{cor}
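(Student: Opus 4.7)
The plan is to build the splitting field iteratively, maintaining at every stage a primitive element presentation of an intermediate field and the factorisation of $f$ over it. Initialise by setting $L_1 \coloneqq \Q(\beta_1)$ where $\beta_1$ is a root of $g_1 \coloneqq f$, and write $f(x) = (x - \beta_1) h_1(x)$ in $L_1[x]$ via a single division.

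At stage $i$ we have a minimal polynomial $g_i \in \Z[y]$ for a primitive element $\beta_i$ of a subfield $L_i \subseteq L$, an expression for the already-adjoined roots of $f$ as polynomials in $\beta_i$, and a monic polynomial $h_i \in L_i[x]$ consisting of the unfactored part of $f$. If $\deg h_i = 0$ we are done. Otherwise we would: (i) clear denominators and make a monic substitution so that $h_i$ (or a suitable transform of it) lies in $\OO_{L_i}[x]$ and apply Theorem \ref{thm:numberfield-fact} to factorise it over $L_i$; (ii) pick an irreducible factor $h_i^*$ and adjoin a root $\alpha$ of $h_i^*$ to form $L_{i+1} = L_i(\alpha)$; (iii) invoke the constructive primitive element theorem to find a small integer $c$ such that $\beta_{i+1} \coloneqq \alpha + c\beta_i$ is a primitive element of $L_{i+1}$, computing its minimal polynomial $g_{i+1} \in \Z[y]$ as the appropriate irreducible factor of $\res_z(g_i(z), h_i^*(y - cz))$, which we extract by another call to Theorem \ref{t:factor-over-z}; (iv) re-express $\beta_i$ and the previously-adjoined roots in terms of $\beta_{i+1}$ via linear algebra over $\Q$ inside $\Q[y]/\gen{g_{i+1}}$; (v) divide out the linear factor $(x - \alpha)$ from $h_i$ in $L_{i+1}[x]$ to obtain $h_{i+1}$.

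For termination, note that $[L_i:\Q]$ strictly increases from one iteration to the next and is bounded above by $m$, so there are at most $\log_2 m$ iterations; in particular the number of iterations is $O(m)$. At each iteration the degree parameters $n, m'$ fed to Theorem \ref{thm:numberfield-fact} are bounded by $m$, and the ambient field has degree bounded by $m$, so each call runs in time $(m \log H')^{O(1)}$ where $H'$ bounds the coefficient size at that stage.

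The main obstacle is controlling the bit-size $H'$ of the coefficients of $g_i$, $h_i$, and of the polynomial expressions for the roots through the iteration. A priori, taking resultants and factoring could square the relevant bit-sizes at each step, producing exponential blow-up. To avoid this one appeals to standard height bounds: Mignotte-type bounds show that factors of a polynomial inherit coefficient sizes polynomial in the original, while the minimal polynomial of an algebraic number of bounded height and bounded degree has coefficients of bounded height (with bounds polynomial in $m$ and $\log H$). Combining these height bounds with the complexity of each step, one verifies by induction that all intermediate data has bit-size $(m \log H)^{O(1)}$, yielding the claimed total running time. The explicit form of these height inequalities is carried out in \cite[Corollary~6]{Lan85}, which is essentially the statement being invoked.
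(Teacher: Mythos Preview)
Your proposal is correct and follows essentially the same approach as the paper, which simply notes that one iterates Theorem~\ref{thm:numberfield-fact} and refers to \cite[Corollary~6]{Lan85} for the details; you have supplied precisely those details (primitive element step, resultant for the new minimal polynomial, height control via Mignotte-type bounds) and even cite the same source. One small wrinkle: your claim that $[L_i:\Q]$ strictly increases at every step assumes the chosen irreducible factor $h_i^*$ has degree at least~$2$; if all remaining factors over $L_i$ are linear you should simply strip them off rather than ``adjoin'' a root, but either way the total number of iterations is trivially $O(m)$ and the argument is unaffected.
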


\begin{cor}[Computing the Galois group]\label{c:galois-galois-f}
Let $d, H\geq 2$. Let $f\in \Z[x]$ be monic and irreducible with degree $d$ and coefficients of size at most $H$. Then we may (a) determine whether $f$ splits into linear factors in $K\coloneqq\Q[x]/\langle f \rangle$, (b) if it does, then obtain the factorisation in $\OO_K[x]$, and (c) if it does, then obtain matrices for the action of the Galois group of $K/\Q$ on $K$ with respect to the monomial basis, all with at most $(d\log H)^{O(1)}$ bit operations. Furthermore, after multiplying by $\delta_f$, the bit size of the (integer) entries of each matrix are at most $(d\log H)^{O(1)}$.
\end{cor}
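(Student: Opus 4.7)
The plan is to apply Theorem \ref{thm:numberfield-fact} directly with $g \coloneqq f$, so that $L = K = \Q[x]/\langle f \rangle$. Viewing $f$ itself as an element of $\OO_K[x]$ (indeed of $\Z[x] \subseteq \OO_K[x]$), Theorem \ref{thm:numberfield-fact} with parameters $m = n = d$ and $A = B = H$ produces the complete factorisation of $f$ over $\OO_K$ in $(d \log H)^{O(1)}$ bit operations. To settle (a), we simply check whether every factor is linear. If so, we keep that factorisation for (b); otherwise we stop.

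For (c), suppose $f$ splits as $f(x) = \prod_{i=1}^d (x - \alpha_i)$ in $\OO_K[x]$, with each $\alpha_i$ output by the previous step as a $\Q$-linear combination $\alpha_i = \sum_{j=0}^{d-1} c_{i,j} \theta^j$ of the monomial basis, where $\theta \coloneqq x + \langle f \rangle$. Since $\theta$ is a root of $f$, each $\Q$-algebra automorphism $\sigma$ of $K$ is determined by $\sigma(\theta)$, and $\sigma(\theta)$ must be one of the $\alpha_i$; conversely each $\alpha_i$ yields such a $\sigma$. Hence we obtain all $d$ elements of $G = \Gal(K/\Q)$. To build the matrix of $\sigma$ in the monomial basis, I would compute the reduced powers $\alpha_i^0, \alpha_i^1, \ldots, \alpha_i^{d-1}$ modulo $f(\theta)$ by repeated multiplication in $\Q[\theta]$; the $j$-th such reduced power, written as a $\Q$-linear combination of $1, \theta, \ldots, \theta^{d-1}$, provides the $j$-th column. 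This requires $O(d)$ polynomial multiplications and reductions in $K$, each on polynomials of degree less than $d$ with rational coefficients of bit size $(d \log H)^{O(1)}$ coming from Theorem \ref{thm:numberfield-fact}, so the total cost remains $(d \log H)^{O(1)}$.

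It remains to justify that after multiplying by $\delta_f$ the matrix has integer entries of bit size $(d \log H)^{O(1)}$. Recall from the proof of Lemma \ref{l:disc} that $\delta_f = [\OO_K : \Z[\theta]]^2 \Delta_K$, so $[\OO_K : \Z[\theta]]$ divides $\delta_f$, and thus any element of $\OO_K$, when expressed as a $\Q$-linear combination of $1, \theta, \ldots, \theta^{d-1}$, has denominators dividing $[\OO_K : \Z[\theta]]$ and in particular dividing $\delta_f$. Since $\sigma(\theta^j) = \alpha_i^j \in \OO_K$ for every $j$, the columns of $M_\sigma$ have rational entries with denominators dividing $\delta_f$; multiplying by $\delta_f$ produces integer entries. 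For the size bound, $\log|\delta_f| \leq (d \log H)^{O(1)}$ by Lemma \ref{l:disc-bound}, and the rational coefficients of $\alpha_i^j$ mod $f$ have bit size $(d \log H)^{O(1)}$ (the depth of the repeated multiplication is $O(\log d)$ and each reduction modulo $f$ can only inflate coefficient sizes polynomially in $d$ and $\log H$), so the product is still $(d \log H)^{O(1)}$.

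The main obstacle will be justifying the bit size estimate for the intermediate powers $\alpha_i^j \bmod f$ cleanly: one must check that reducing modulo $f$ at each step keeps numerator and denominator bit sizes polynomial in $d$ and $\log H$, rather than doubling with each multiplication. This can be handled either by reducing rationals to lowest terms using that all denominators divide $\delta_f$ (so the denominator is bounded a priori), or by bounding numerators via the standard estimate $\|h \bmod f\|_\infty \ll (d H)^{O(d)} \|h\|_\infty$ combined with keeping only $O(\log d)$ multiplications in a balanced product tree; either route stays comfortably within $(d \log H)^{O(1)}$.
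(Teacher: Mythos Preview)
Your proposal is correct and follows essentially the same route as the paper: invoke Theorem~\ref{thm:numberfield-fact} with $g=f$ for (a) and (b), then for (c) identify each $\sigma$ with a root $\alpha_i$ and build the matrix by computing the successive powers $\alpha_i^j$ reduced modulo $f$. The paper phrases this last step as ``successively multiplying by $\delta_f\theta_\sigma$ and reducing modulo $f$'', i.e.\ it clears the denominator at the outset and works entirely over $\Z[\theta]$, whereas you work over $\Q[\theta]$ and invoke $[\OO_K:\Z[\theta]]\mid\delta_f$ to bound denominators a posteriori; these are equivalent, and in fact your justification of why $\delta_f$ suffices to clear denominators is more explicit than the paper's. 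One small wrinkle: your ``balanced product tree with $O(\log d)$ multiplications'' remark is off, since you need \emph{all} powers $\alpha_i^0,\ldots,\alpha_i^{d-1}$, not just the top one; but this is harmless, because your first alternative (denominators bounded by $\delta_f$, hence numerator growth at each of the $d$ sequential steps is additive of size $(d\log H)^{O(1)}$) already keeps everything within $(d\log H)^{O(1)}$ bits.
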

\begin{proof}
The points (a) and (b) are immediate from Theorem \ref{thm:numberfield-fact}. Suppose now that $f$ splits in $K$ and write $K=\Q(\theta)$. Let $\sigma \in \Gal(K/\Q)$. There is a root $\theta_\sigma$ of $f$ such that $\sigma(\theta) = \theta_\sigma$, and from part (b) we may assume that we have an explicit description $\delta_f\theta_\sigma = \sum_{i=0}^{d-1} c_i \theta^i$, where $c_i\in \Z$. We note that $\log |c_i| \leq (d\log H)^{O(1)}$ since part (b) allows us to compute the $c_i$ with $(d\log H)^{O(1)}$ bit operations, so this is certainly an upper bound for their bit size. 

It remains to compute the matrices for $\delta_f \sigma$ with respect to the basis $\{\theta^j\}_{j=0}^{d-1}$. We may do this, for example, by successively multiplying by $\delta_f \theta_\sigma$ and reducing modulo $f$. The claimed bounds on both the time cost and (as a consequence) the sizes of the matrix entries then follows from being able to deterministically execute this last sentence in polynomial time (cf., e.g., \cite[Chapter 4.6]{Knu69},\cite{BP86}).   
\end{proof}

\bibliographystyle{alpha}
\bibliography{fact.bib}

\end{document}